\newtheorem{theorem}{Theorem}
\newtheorem{lemma}[theorem]{Lemma}
\newtheorem{proposition}[theorem]{Proposition}
\newtheorem{corollary}[theorem]{Corollary}
\theoremstyle{remark}
\newtheorem{remark}[theorem]{Remark}
\newtheorem{example}[theorem]{Example}
\theoremstyle{remark}
\newcommand{\thistheoremname}{}
\newtheorem*{generictheorem}{\thistheoremname}
\newcommand{\calA}{\mathcal{A}}
\newcommand{\calH}{\mathcal{H}}
\newcommand{\calL}{\mathcal{L}}
\newcommand{\calN}{\mathcal{N}}
\newcommand{\calO}{\mathcal{O}}
\newcommand{\calR}{\mathcal{R}}
\newcommand{\ignore}[1]{}
\newcommand{\bbN}{\mathbb{N}}
\newcommand{\bbR}{\mathbb{R}}
\newcommand{\bfnull}{\boldsymbol{0}}
\newcommand{\bfa}{\mathbf{a}}
\newcommand{\bfb}{\mathbf{b}}
\newcommand{\bfx}{\mathbf{x}}
\newcommand{\bfy}{\mathbf{y}}
\newcommand{\bfA}{\mathbf{A}}
\newcommand{\bfB}{\mathbf{B}}
\newcommand{\bfD}{\mathbf{D}}
\newcommand{\bfP}{\mathbf{P}}
\newcommand{\bfW}{\mathbf{W}}
\newcommand{\bfX}{\mathbf{X}}
\newcommand{\bfY}{\mathbf{Y}}
\newcommand{\rmc}{\mathrm{c}}
\DeclarePairedDelimiterX{\set}[2]{\lbrace}{\rbrace}{#1 \,\delimsize\vert\, #2}
\DeclarePairedDelimiterX{\abs}[1]{\lvert}{\rvert}{#1}
\DeclarePairedDelimiterX{\norm}[1]{\lVert}{\rVert}{#1}
\title{Stability of sorting based embeddings}
\author{Radu Balan, Efstratios Tsoukanis and Matthias Wellershoff}
\date{\today}
\begin{document}
\maketitle

\begin{abstract}
    Consider a group $G$ of order $M$ acting unitarily on a real inner product space $V$. We show that the sorting based embedding obtained by applying a general linear map $\alpha : \bbR^{M \times N} \to \bbR^D$ to the invariant map $\beta_\Phi : V \to \bbR^{M \times N}$ given by sorting the coorbits $(\langle v, g \phi_i \rangle_V)_{g \in G}$, where $(\phi_i)_{i=1}^N \in V$, satisfies a bi-Lipschitz condition if and only if it separates orbits.
    
    Additionally, we note that any invariant Lipschitz continuous map (into a Hilbert space) factors through the sorting based embedding, and that any invariant continuous map (into a locally convex space) factors through the sorting based embedding as well.
\end{abstract}

\section{Introduction}

\newcommand{\group}{G}
\newcommand{\vsp}{V}
\newcommand{\dimvsp}{d}
\DeclarePairedDelimiterX{\innervsp}[2]{\langle}{\rangle_\vsp}{#1, #2}
\DeclarePairedDelimiterX{\normvsp}[1]{\lVert}{\rVert_\vsp}{#1}
\newcommand{\distqsp}{\operatorname{dist}}
\newcommand{\windows}{\Phi}
\newcommand{\coorbit}[1]{\kappa_{#1}}
\DeclarePairedDelimiterX{\bracket}[1]{(}{)}{#1}
\newcommand{\ordgroup}{M}
\newcommand{\sortedcoorbits}[1]{\beta_{#1}}
\DeclarePairedDelimiterXPP\sort[1]{\operatorname{sort}}(){}{#1}
\newcommand{\nwin}{N}
\newcommand{\linear}{\alpha}
\newcommand{\embeddingdim}{D}
\newcommand{\embedding}{\gamma}

Suppose that a specific learning task with input space $\vsp$ is invariant under the action of a group $\group$. Then, it makes sense to construct a class of hypotheses that factor into two parts: i.~a group invariant part $h : \vsp \to \vsp_\mathrm{int}$, where $\vsp_\mathrm{int}$ is some intermediate space, and ii.~an ``optimisable'' part $g : \vsp_\mathrm{int} \to W$, where $W$ denotes the space of all possible outputs. In this way, we can use an optimisation algorithm to find $g$ in such a way that $f = g \circ h$ fits some training data, and the above construction ascertains that $f : \vsp \to W$ is invariant under the action of $G$.

So, introducing the equivalence relation $v \sim w :\iff \exists g \in G : v = gw$ on $V$ and assuming that $V$ has a norm $\norm{\cdot}_V$, our goal becomes to construct maps $ h : V \to \mathbb{R}^D $, where $ D \in \mathbb{N} $ is as small as possible, and $ h $ satisfies the properties:
\begin{enumerate}
    \item \emph{Invariance.} $ h(v) = h(w) $ for all $ v, w \in V $ such that $ v \sim w $.
    \item \emph{Orbit separation.} $ v \sim w $ for all $ v, w \in V $ such that $ h(v) = h(w) $.
    \item \emph{Bi-Lipschitz condition.} There exist constants $ 0 < c \leq C $ such that 
    \begin{equation}\label{eq:biLipschitz_condition}
        c \distqsp(v,w) \leq \norm{h(v) - h(w)}_2 \leq C \distqsp(v,w), \qquad v, w \in V.
    \end{equation}
\end{enumerate}

\noindent
Here and throughout the paper, $\distqsp(v,w) := \min_{g \in \group} \normvsp{v - gw}$ denotes the natural metric on the quotient space $\vsp/{\sim}$.

The approach described above is an instance of \emph{invariant machine learning}: techniques designed to ensure that hypotheses are robust to specific changes in the input data.
More precisely, it is a special form of \emph{feature engineering}, where raw data is transformed into a more useful set of inputs. 

Feature transforms have been suggested in computer vision, for example, where handcrafted maps such as the scale-invariant feature transform (SIFT) \cite{lowe1999object} or the histogram of oriented gradients (HOG) \cite{dalal2015histograms} were created to be invariant to transformations like scaling, translation or rotation. The results we present here were inspired by the two more recent papers \cite{cahill2024group,dym2024low} and are a continuation of work presented in \cite{balan2022permutation,balan2023GI,balan2023GII}. Similar approaches can also be found in \cite{aslan2023group,cahill2024stable,cahill2024towards} and, most notably, \cite{mixon2024stable}.

Taking a wider view of the literature, there is another notable approach to invariant machine learning which consists of propagating the invariance of a problem through multiple \emph{equivariant layers}: maps $f : V \to W$ such that $f(gv) = g f(v)$ for all $g \in G$ and all $v \in V$. (Here, $G$ is also assumed to act on $W$.) The posterchildren for equivariant machine learning models are convolutional neural networks (CNNs) \cite{lecun1998gradient} which are translation invariant. (Though some care has to be taken when defining ``translation invariance'' \cite{azulay2019why}.)

A well-known generalisation of the CNN architecture in the same spirit is the group equivariant convolutional network architecture \cite{cohen2016group} which introduces layers that can respect more general symmetries. Alternative approaches include \cite{villar2021scalars} (cf.~also \cite{blum-smith2023machine}) as well as \cite{yarotsky2022universal}.

\subsection{Sorting-based embeddings}

Let $ \group $ be a finite group acting unitarily on a $\dimvsp \in \bbN$ dimensional real inner product space $ \vsp $. One approach to construct maps that satisfy items~1 through 3 is to enumerate the group $ \group = \{g_i\}_{i=1}^M $ and define the coorbits
\begin{equation*}
    \kappa_\phi : \vsp \to \mathbb{R}^M, \qquad \kappa_\phi v := \begin{pmatrix}
        \innervsp{ v }{ g_1 \phi } & \dots & \innervsp{ v }{ g_M \phi }
    \end{pmatrix}^\top,
\end{equation*}
for $ \phi \in \vsp $, where $\innervsp{\cdot}{\cdot}$ denotes the inner product on $V$. By choosing a finite sequence $ \Phi := (\phi_i)_{i=1}^N \in \vsp $, sorting the coorbits, and collecting them in a matrix, one obtains an invariant map
\begin{equation*}
    \beta_\Phi : \vsp \to \mathbb{R}^{M \times N}, \qquad \beta_\Phi(v) := \begin{pmatrix}
        \operatorname{sort}(\kappa_{\phi_1} v) & \dots & \operatorname{sort}(\kappa_{\phi_N} v)
    \end{pmatrix},
\end{equation*}
where $\operatorname{sort} : \bbR^M \to \bbR^M$ denotes the operator that sorts vectors in a monotonically decreasing way. To reduce the embedding dimension, a linear map $ \alpha : \mathbb{R}^{M \times N} \to \mathbb{R}^\embeddingdim $ can be applied to the sorted coorbits.

This idea was first introduced in \cite{balan2022permutation} in the context of the action of the group $S_m$ by row permutation on the space of matrices $\bbR^{m \times n}$. The authors demonstrated that the map $ \gamma(\bfX) := \operatorname{sort}(\bfX \bfA) $\footnote{Here, $\operatorname{sort} : \bbR^{m \times N} \to \bbR^{m \times N}$ is the operator that sorts matrices column-wise.} separates orbits for full spark matrices $ \bfA \in \mathbb{R}^{n \times N} $ with $ N > (n-1)m! $. They also showed that $ \gamma $ satisfies the bi-Lipschitz condition (inequality~\eqref{eq:biLipschitz_condition}) if it separates orbits, and that $ \alpha' \circ \gamma $ satisfies inequality~\eqref{eq:biLipschitz_condition} for a generic\footnote{By \emph{generic}, we mean that a statement is true in a non-empty Zariski open set.} linear map $ \alpha' : \mathbb{R}^{m \times N} \to \mathbb{R}^\embeddingdim $ with $ N \geq 2n $ and $ \embeddingdim \geq 2mn $ (provided that $\gamma$ separates orbits). Notably, the embedding $ \alpha' \circ \gamma $ passes through an intermediate Euclidean space of dimension larger than $ (n-1)m! $, which grows rapidly with the matrix size.

The papers \cite{cahill2024group,dym2024low} address this issue. In \cite{dym2024low}, among other things, it is shown that $ \gamma(\bfX) := \operatorname{diag}(\bfB^\top \operatorname{sort}(\bfX \bfA)) $ separates orbits for generic pairs $ (\bfA, \bfB) \in \mathbb{R}^{n \times \embeddingdim} \times \mathbb{R}^{\embeddingdim \times m} $ with $ \embeddingdim > 2mn $. Whether $ \gamma $ satisfies inequality~\eqref{eq:biLipschitz_condition} remained an open question. In \cite{cahill2024group}, the authors show that, if $ \alpha(\bfX) := \begin{pmatrix} X_{11} & \dots & X_{1\embeddingdim} \end{pmatrix} $ selects the maximal entries of the coorbits, then $ \gamma := \alpha \circ \beta_\Phi : \mathbb{R}^d \to \mathbb{R}^\embeddingdim $ separates orbits for generic sequences $ \Phi = (\phi_i)_{i=1}^\embeddingdim \in \mathbb{R}^d $ with $ \embeddingdim \geq 2d $, whenever $ \group \leq O(d) $ is a finite subgroup of the orthogonal $ d \times d $ matrices. They also prove that $ \gamma $ satisfies inequality~\eqref{eq:biLipschitz_condition} with high probability if $ \embeddingdim $ is sufficiently large, and they pose the question whether $ \gamma $ satisfies inequality~\eqref{eq:biLipschitz_condition} whenever it separates orbits.

This was affirmatively answered in \cite{balan2023GI}, where it was shown that $ \gamma = \alpha \circ \beta_\Phi : \vsp \to \mathbb{R}^\embeddingdim $ satisfies the bi-Lipschitz condition if it separates orbits for any finite group $ \group $ acting isometrically on $ \vsp $, provided that $ \alpha : \mathbb{R}^{M \times N} \to \mathbb{R}^\embeddingdim $ selects any subset of the entries of $ \beta_\Phi(v) $. Additionally, in \cite{balan2023GII}, it was shown that such $\gamma$ separate orbits provided that $\Phi$ is chosen generically with respect to the Zariski topology on $\vsp^N$ and that the right subset of entries of $ \beta_\Phi(v) $ is chosen.

\subsection{Results}

In this paper, we generalise the results of \cite{balan2022permutation,balan2023GI} to encompass arbitrary linear maps $ \alpha : \mathbb{R}^{M \times N} \to \mathbb{R}^\embeddingdim $, which proves, in particular, that the embeddings $ \gamma(\bfX) = \operatorname{diag}(\bfB^\top \operatorname{sort}(\bfX \bfA)) $ satisfy inequality~\eqref{eq:biLipschitz_condition} for generic pairs $ (\bfA, \bfB) \in \mathbb{R}^{n \times \embeddingdim} \times \mathbb{R}^{\embeddingdim \times m} $ when $ \embeddingdim > 2mn $.

\begin{theorem}[Main result]\label{thm:main}
    The embedding $ \gamma = \alpha \circ \beta_\Phi : \vsp \to \mathbb{R}^\embeddingdim $ separates orbits if and only if it satisfies the bi-Lipschitz condition~\eqref{eq:biLipschitz_condition}.
\end{theorem}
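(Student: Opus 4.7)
The direction ``bi-Lipschitz $\Rightarrow$ orbit separation'' is immediate from the lower bound in~\eqref{eq:biLipschitz_condition}: if $\gamma(v) = \gamma(w)$ then $\operatorname{dist}(v, w) = 0$, i.e., $v \sim w$. The upper Lipschitz bound is standard: the sort operator on $\bbR^M$ is $1$-Lipschitz in the Euclidean norm, each coorbit $\kappa_{\phi_j}$ is linear, hence $\beta_\Phi$ and $\gamma = \alpha \circ \beta_\Phi$ are Lipschitz, and $G$-invariance allows one to replace $\|v - w\|_V$ by $\operatorname{dist}(v, w)$. The substance of the theorem is therefore the lower Lipschitz bound under orbit separation.

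My plan is to exploit the piecewise linearity of $\beta_\Phi$. The space $V$ decomposes into a finite family of closed polyhedral cones $\{C_i\}$---the joint ``sort chambers'' of $\kappa_{\phi_1}, \ldots, \kappa_{\phi_N}$, carved out by the linear forms $v \mapsto \langle v, (g_k - g_l)\phi_j \rangle_V$---on each of which $\beta_\Phi$ equals a linear map $L_i$. Thus on each product cone $C_i \times C_j \subseteq V \times V$, the difference $\gamma(v) - \gamma(w) = \alpha L_i v - \alpha L_j w =: M_{ij}(v, w)$ is linear in $(v, w)$. Writing $N_g(v, w) := v - gw$ for $g \in G$, orbit separation translates into the inclusion
\begin{equation*}
    \ker M_{ij} \cap (C_i \times C_j) \;\subseteq\; \bigcup_{g \in G} \ker N_g,
\end{equation*}
since $M_{ij}(v, w) = 0$ forces $\gamma(v) = \gamma(w)$, hence $v = gw$ for some $g$. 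A convex subset of a finite union of linear subspaces must lie in a single subspace (proved via relative interiors: each proper $A \cap U_i$ has empty relative interior in $\operatorname{aff}(A)$, so their finite union cannot cover $A$). Applied to the convex cone $\ker M_{ij} \cap (C_i \times C_j)$, this produces some $g_{ij} \in G$ with $\ker M_{ij} \cap (C_i \times C_j) \subseteq \ker N_{g_{ij}}$.

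The remaining ingredient is a Hoffman-type lemma: \emph{for a closed polyhedral cone $F$ in a finite-dimensional inner product space and linear maps $M, N$ with $\ker M \cap F \subseteq \ker N$, there exists $c > 0$ such that $\|Nx\| \leq c \|Mx\|$ on $F$.} I would prove this by setting $L := \Span(\ker M \cap F) \subseteq \ker M \cap \ker N$, projecting $F$ orthogonally onto $L^\perp$ to obtain the closed polyhedral cone $\overline F = (F + L) \cap L^\perp$, checking that the induced map $\overline M := M|_{L^\perp}$ has trivial kernel on $\overline F$ (if $z \in \overline F$ with $\overline M z = 0$, then $y + z \in \ker M \cap F \subseteq L$ for some $y \in L$, forcing $z \in L \cap L^\perp = \{0\}$), and using compactness on $\overline F \cap \{z \in L^\perp : \|z\|_V = 1\}$ to extract $\|\overline M z\| \geq c' \|z\|_V$ on $\overline F$; then for $x = y + z \in F$ with $y \in L$ and $z \in L^\perp$ one has $Mx = \overline M z$ and $Nx = \overline N z$, so $\|Nx\| \leq (\|\overline N\|/c')\|Mx\|$. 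Applying this on each $C_i \times C_j$ with $M = M_{ij}$, $N = N_{g_{ij}}$ yields
\begin{equation*}
    \operatorname{dist}(v, w) \leq \|v - g_{ij}w\|_V \leq c_{ij}\|\gamma(v) - \gamma(w)\|_2,
\end{equation*}
and $c := 1/\max_{i, j} c_{ij}$ (a finite max) is the global constant.

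I expect the main obstacle to lie in the Hoffman-type lemma, in particular the verification that $\overline F$ is closed and polyhedral (as a Minkowski sum of a polyhedral cone with a linear subspace) and that $\ker \overline M \cap \overline F = \{0\}$. The ``convex in union of subspaces'' step, though elementary, is the crucial conceptual move: it replaces the a priori $(v, w)$-dependent choice of group element witnessing $v \sim w$ by a uniform choice $g_{ij}$ on each cell of the decomposition, thereby reducing the entire problem to finite linear algebra on finitely many polyhedral cones.
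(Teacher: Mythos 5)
Your proposal is correct, and it takes a genuinely different route from the paper. The paper argues by contradiction: assuming $\widehat\gamma$ is injective but not lower Lipschitz, it extracts sequences that witness a vanishing lower Lipschitz constant, and through an intricate induction (Lemmas~\ref{lem:base_case:sec:D}, \ref{lem:proofofmaintheorem}, \ref{lem:induction_step:sec:D}) builds an orthogonal basis $f_1,\dots,f_d$ along which the local lower Lipschitz constant of $\widehat\gamma$ vanishes, carefully controlling both the sort chambers (via the quantities $\delta$ and $\Delta$ of Lemmas~\ref{lem:simple_inclusion:sec:D}--\ref{lem:finally_done:sec:D}) and the stabilisers of the group action, until the injectivity at $f_1+\dots+f_d$ is contradicted by Lemma~\ref{lem:proofofmaintheorem}. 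You instead exploit the piecewise linearity of $\beta_\Phi$ head-on: the finite polyhedral-cone decomposition of $V$ by joint sort chambers reduces $\gamma(v)-\gamma(w)$ to a single linear map $M_{ij}$ on each cell $C_i\times C_j$, orbit separation becomes a kernel inclusion $\ker M_{ij}\cap(C_i\times C_j)\subseteq\bigcup_g\ker N_g$, the relative-interior argument collapses the union to a single $\ker N_{g_{ij}}$, and the Hoffman-type estimate on polyhedral cones delivers a lower Lipschitz constant on each cell; a finite max then gives the global constant. I checked the delicate points: $\overline F=(F+L)\cap L^\perp$ is indeed the orthogonal projection of $F$ onto $L^\perp$ and is a closed polyhedral cone (Minkowski sums of polyhedral cones are polyhedral, hence closed); if $z\in\overline F$ with $\overline Mz=0$ then $z=P_{L^\perp}x$ for some $x\in F$, and $x=P_Lx+z\in\ker M\cap F\subseteq L$ forces $z=0$; the degenerate case $\overline F=\{0\}$ (so $F\subseteq L\subseteq\ker M\cap\ker N$) makes the bound vacuously true; and the ``convex set in a finite union of affine subspaces lies in one of them'' argument via relative interiors is sound. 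What your approach buys is conceptual economy and, in principle, explicit constants: the lower Lipschitz constant is $1/\max_{i,j}c_{ij}$ over a finite index set, each $c_{ij}$ being a ratio of operator norms on a polyhedral cone, rather than existing merely by compactness at the end of a contradiction argument. What the paper's approach buys is that it isolates, as reusable intermediate results, the precise stability of sort chambers and stabilisers under perturbations (Lemmas~\ref{lem:simple_inclusion:sec:D}--\ref{lem:finally_done:sec:D} and \ref{lem:simple_inclusion_stabilisers}), which appear to be of independent interest within this line of work. Both are valid proofs of Theorem~\ref{thm:main}.
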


Moreover, we remark that, as noted in \cite{tsoukanis2024G-invariant}, the hypothesis class of functions that factor into an invariant and an optimisable part, as described at the beginning of the introduction, can contain general invariant (Lipschitz) continuous functions, if $g$ is restricted to a sufficiently large class of functions.

Specifically, we note that any group invariant Lipschitz map into a Hilbert space factors through $\gamma$.

\begin{theorem}\label{thm:cor_kirszbraun}
    Let $f : V \to H$ be invariant under the action of $G$ and Lipschitz continuous with Lipschitz constant $C_f > 0$, where $H$ is some Hilbert space. If $ \gamma = \alpha \circ \beta_\Phi : \vsp \to \mathbb{R}^\embeddingdim $ separates orbits, then there exists $g : \bbR^D \to H$ Lipschitz continuous with Lipschitz constant at most $C_f/c$ such that $f = g \circ \gamma$.
\end{theorem}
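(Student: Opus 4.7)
The plan is to descend $f$ through the orbit-separating embedding $\gamma$ to a Lipschitz map defined on the image $\gamma(V) \subseteq \bbR^D$, and then invoke Kirszbraun's extension theorem to obtain a Lipschitz map on all of $\bbR^D$.

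First, I would use \Cref{thm:main} to conclude that, since $\gamma$ separates orbits, it satisfies the bi-Lipschitz inequality~\eqref{eq:biLipschitz_condition} with some constants $0 < c \le C$. In particular, the lower bound $c\,\distqsp(v,w) \le \norm{\gamma(v) - \gamma(w)}_2$ together with orbit separation ensures that $\gamma(v) = \gamma(w)$ if and only if $v \sim w$. Combined with the $G$-invariance of $f$, this lets me define unambiguously
\begin{equation*}
    g_0 : \gamma(V) \to H, \qquad g_0(\gamma(v)) := f(v),
\end{equation*}
since $\gamma(v) = \gamma(w)$ forces $v \sim w$, whence $f(v) = f(w)$.

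Next I would verify that $g_0$ is Lipschitz with constant $C_f/c$. For any $v, w \in V$, the $G$-invariance of $f$ and the isometric action of $G$ on $V$ yield
\begin{equation*}
    \norm{g_0(\gamma(v)) - g_0(\gamma(w))}_H = \norm{f(v) - f(gw)}_H \le C_f \normvsp{v - gw}
\end{equation*}
for every $g \in G$; minimising over $g$ gives $\norm{g_0(\gamma(v)) - g_0(\gamma(w))}_H \le C_f \distqsp(v,w)$. Applying the lower bi-Lipschitz bound from \Cref{thm:main} then yields
\begin{equation*}
    \norm{g_0(\gamma(v)) - g_0(\gamma(w))}_H \le \frac{C_f}{c} \norm{\gamma(v) - \gamma(w)}_2,
\end{equation*}
so $g_0$ is $C_f/c$-Lipschitz on the subset $\gamma(V) \subseteq \bbR^D$.

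Finally, I would appeal to Kirszbraun's theorem, which extends any Lipschitz map from a subset of a Hilbert space into a Hilbert space without increasing the Lipschitz constant. Since $\bbR^D$ and $H$ are Hilbert spaces, this produces $g : \bbR^D \to H$ with Lipschitz constant at most $C_f/c$ and $g|_{\gamma(V)} = g_0$, whence $f = g \circ \gamma$ by construction. No step here is a genuine obstacle: the only substantive input beyond \Cref{thm:main} is Kirszbraun's extension theorem, which applies directly because the target is assumed to be Hilbert.
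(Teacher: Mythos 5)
Your proposal is correct and takes essentially the same route as the paper: define the factored map on $\gamma(V)$, verify it is $C_f/c$-Lipschitz via the lower bound from Theorem~\ref{thm:main}, and extend by Kirszbraun. The paper's only cosmetic difference is that it first descends $f$ and $\gamma$ explicitly to the quotient $V/{\sim}$ and packages the well-definedness-plus-Kirszbraun step as a standalone Corollary~\ref{cor:kirszbraun}, whereas you inline that reasoning directly on $\gamma(V)$.
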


\begin{remark}
    It is obvious that any composition of Lipschitz continuous maps is Lipschitz continuous in turn. In particular, $f = g \circ \gamma$ is Lipschitz continuous (and invariant under the action of $G$) provided that $g : \bbR^D \to M$ is Lipschitz continuous, where $M$ is a metric space. 
\end{remark}

Additionally, we note that any continuous group invariant map into a locally convex space also factors through $\gamma$.

\begin{theorem}\label{thm:cor_dugundji}
    Let $f : V \to Z$ be invariant under the action of $G$ and continuous, where $Z$ is some locally convex space. If $ \gamma = \alpha \circ \beta_\Phi : \vsp \to \mathbb{R}^\embeddingdim $ separates orbits, then there exists $g : \bbR^D \to Z$ continuous such that $g(\bbR^D)$ is a subset of the convex hull of $f(V)$ and such that $f = g \circ \gamma$.
\end{theorem}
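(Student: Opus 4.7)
The plan is to push $f$ through the quotient by the group action, identify the quotient with a closed subset of $\bbR^D$ via $\gamma$, and then invoke Dugundji's extension theorem to produce $g$.

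First, by $G$-invariance of both $f$ and $\gamma$, the two maps factor through the quotient map $q : \vsp \to \vsp/{\sim}$: I would write $f = \tilde f \circ q$ and $\gamma = \tilde\gamma \circ q$, where $\tilde f : \vsp/{\sim} \to Z$ is continuous and $\tilde\gamma : \vsp/{\sim} \to \bbR^D$ is injective (thanks to orbit separation). By Theorem~\ref{thm:main}, $\gamma$ satisfies the bi-Lipschitz condition~\eqref{eq:biLipschitz_condition}, which translates into $\tilde\gamma$ being a bi-Lipschitz homeomorphism from $(\vsp/{\sim}, \distqsp)$ onto its image $\gamma(\vsp) \subseteq \bbR^D$. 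This lets me define $g_0 := \tilde f \circ \tilde\gamma^{-1} : \gamma(\vsp) \to Z$, which is continuous.

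Next, I would verify that $\gamma(\vsp)$ is closed in $\bbR^D$. Because $\vsp$ is a finite-dimensional real inner product space and $G$ is finite acting by isometries, the quotient metric space $(\vsp/{\sim}, \distqsp)$ is complete. The bi-Lipschitz map $\tilde\gamma$ then transports completeness to its image, so $\gamma(\vsp)$ is complete as a metric subspace of $\bbR^D$, hence closed in $\bbR^D$.

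Finally, I would apply Dugundji's extension theorem: any continuous map from a closed subset of a metric space into a locally convex space extends continuously to the entire space, with the extension taking values in the convex hull of the original image. Applied to $g_0$, this yields a continuous $g : \bbR^D \to Z$ with $g|_{\gamma(\vsp)} = g_0$ and $g(\bbR^D) \subseteq \operatorname{conv}(g_0(\gamma(\vsp))) = \operatorname{conv}(f(\vsp))$. The relation $g \circ \gamma = g_0 \circ \tilde\gamma \circ q = \tilde f \circ q = f$ follows immediately. The only step that requires any genuine work is the closedness of $\gamma(\vsp)$, which is where Theorem~\ref{thm:main} does the heavy lifting; the rest is a formal quotient argument together with a black-box application of Dugundji's theorem, and the locally convex hypothesis on $Z$ enters only in this final extension step.
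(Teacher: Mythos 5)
Your proposal is correct and follows essentially the same route as the paper: factor $f$ and $\gamma$ through the quotient, use Theorem~\ref{thm:main} to make $\widehat\gamma$ a bi-Lipschitz homeomorphism onto a closed subset of $\bbR^D$, and then apply the Dugundji--Tietze extension theorem (the paper does this via Corollary~\ref{cor:dugundji}, which you in effect inline). Your completeness argument for the closedness of $\gamma(\vsp)$ is a valid and slightly more explicit justification of a step the paper states more tersely.
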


\begin{remark}
    Vice versa, since compositions of continuous maps are continuous, we have that $f = g \circ \gamma$ is continuous (and invariant under the action of $G$) if $g : \bbR^D \to Z$ is continuous, where $Z$ is a topological space.
\end{remark}

\subsection{Examples}

Finally, we want to remark that our main result does \emph{not} hold for general ReLU neural networks and we want to provide two examples of setups in which our results do apply.

\begin{remark}[There exists an injective ReLU neural networks that is \emph{not} bi-Lipschitz]
    A general ReLU neural network of depth $L \in \bbN$ is a map $\calN : \bbR^{\ell_0} \to \bbR^{\ell_L}$ of the form 
    \begin{equation*}
        \calN(x) := \begin{cases}
            \bfW_1 x + \bfb_1 & \mbox{if } L = 1, \\
            \bfW_L \rho( \dots \rho(\bfW_1 x + \bfb_1) \dots ) + \bfb_L & \mbox{if } L \geq 2,
        \end{cases}
    \end{equation*}
    where $\ell_0,\dots,\ell_L \in \bbN$ denote the layer widths, $\rho : \bbR \to \bbR$, $\rho(x) := \max\lbrace x, 0 \rbrace$, denotes the ReLU activation function, which is applied component-wise to vector inputs, $\bfW_k \in \bbR^{\ell_k \times \ell_{k-1}}$ denote the weight matrices and $\bfb_k \in \bbR^{\ell_k}$ denote the bias vectors for $k \in [L]$.

    A simple example of a map that is injective but not bi-Lipschitz is given by $f : \bbR \to \bbR^2$,
    \begin{equation*}
        f(x) := \begin{cases}
            -(1,x+1) & \mbox{if } x < -1, \\
            (x,0) & \mbox{if } -1 \leq x \leq 1, \\
            (1,x-1) & \mbox{if } 1 < x.
        \end{cases}
    \end{equation*}
    It is not hard to check that $f$ is injective. At the same time, $f$ does not satisfy the lower Lipschitz condition because $\norm{f(x) - f(-x)}_2 = 2$ for $x \not\in [-1,1]$ while $\abs{x - (-x)} = 2 \abs{x}$ is unbounded.
    
    The above map can readily be extended to a map $f_d : \bbR^d \to \bbR^{d+1}$ for $d \in \bbN$ via $f_d(\bfx) = f_d(x_1,x_2,\dots,x_d) = (f(x_1),x_2,\dots,x_d)$. Additionally, $f = f_1$ can be implemented as a ReLU neural network with two layers: indeed, choose 
    \begin{gather*}
        \bfW_1 = \begin{pmatrix}
            1 \\ 1 \\ -1
        \end{pmatrix}, \qquad \bfb_1 = \begin{pmatrix}
            1 \\ -1 \\ -1
        \end{pmatrix}, \\
        \bfW_2 = \begin{pmatrix}
            1 & -1 & 0 \\ 0 & 1 & 1
        \end{pmatrix}, \qquad \bfb_2 = \begin{pmatrix}
            -1 \\ 0
        \end{pmatrix},
    \end{gather*}
    and note that $\calN = f$.
\end{remark}

\begin{example}[Permutation invariant representations]
    Let us consider the action of the group $S_m$ by row permutation on the vector space of matrices $\bbR^{m \times n}$. This setup is naturally encountered in learning on graphs with $m$ vertices, where feature vectors of dimension $n$ are associated to every vertex, if the hypothesis is to be invariant under the permutation of vertices.

    We want to come back to the permutation invariant embedding proposed in \cite{dym2024low} as a variation on the embedding proposed earlier in \cite{balan2022permutation}: $\gamma_{\bfA,\bfB} : \bbR^{m \times n} \to \bbR^D$, 
    \begin{equation*}
        \gamma_{\bfA,\bfB}(\bfX) := \operatorname{diag}(\bfB^\top \operatorname{sort}(\bfX\bfA)), \qquad \bfX \in \bbR^{m \times n},
    \end{equation*}
    where $\bfA \in \bbR^{n \times D}$, $\bfB \in \bbR^{m \times D}$, $\operatorname{diag} : \bbR^{D \times D} \to \bbR^D$ is the linear operator that extracts the diagonal from a $D \times D$ matrix and $\operatorname{sort} : \bbR^{m \times D} \to \bbR^{m \times D}$ denotes column-wise sorting of matrices. 
    
    In \cite{dym2024low}, it is shown that $\gamma_{\bfA,\bfB}$ separates orbits for almost every $(\bfA,\bfB) \in \bbR^{n \times D} \times \bbR^{m \times D}$ provided that $D > 2mn$. Additionally, one may note that $\gamma_{\bfA,\bfB}$ can be evaluated in polynomial time. Indeed, evaluating $\gamma_{\bfA,\bfB}$ requires
    \begin{enumerate}
        \item a matrix multiplication of a matrix of dimension $m \times n$ and a matrix of dimension $n \times D$, which can be performed in $\calO(Dmn)$ operations,
        \item sorting $D$ vectors of length $m$, which can be performed in $\calO(Dm\log m)$ operations,
        \item computing $D$ inner products of vectors of length $m$, which can be performed in $\calO(Dm)$ operations. 
    \end{enumerate}
    Overall, we have an evaluation complexity of $\calO(Dm(n + \log m))$ which, assuming that $D \cong mn$ and $m \cong n$, is $\calO(m^4)$.

    Our main result, Theorem~\ref{thm:main}, shows that $\gamma_{\bfA,\bfB}$ satisfies the bi-Lipschitz condition: there exist constants $0 < c \leq C$ such that 
    \begin{equation*}
        c \min_{\bfP \in S_m} \norm{\bfX - \bfP \bfY}_\mathrm{F} \leq \norm{\gamma_{\bfA,\bfB}(\bfX) - \gamma_{\bfA,\bfB}(\bfY)}_2 \leq C \min_{\bfP \in S_m} \norm{\bfX - \bfP \bfY}_\mathrm{F},
    \end{equation*}
    for all $\bfX,\bfY \in \bbR^{m \times n}$. Computation of an upper bound for the Lipschitz constant $C$ is possible by Proposition~\ref{prop:Lipschitz:sec:D}. For practical purposes, it would also be interesting to lower bound the lower Lipschitz constant $c > 0$. While this is hard, in general, in some special cases it is possible.
\end{example}

\begin{example}[Sign retrieval]
    In sign retrieval, estimating $c$ explicitly is simplified by what is known as the $\sigma$-strong complement property \cite{balan2015invertibility, bandeira2014saving}: we are interested in the recovery of vectors $\bfx \in \bbR^n$ from magnitude-only measurements
    \begin{equation}\label{eq:phase_retrieval}
        \abs{ \langle \bfx, \bfa_i \rangle }, \qquad i \in [D],
    \end{equation}
    where $(\bfa_i)_{i = 1}^D \in \bbR^n$ is a sequence of measurement vectors. Since $\bfx$ and $-\bfx$ generate the same measurements, one typically aims to recover vectors \emph{up to a global sign}; i.e., up to the equivalence relation $\bfx \sim \bfy :\iff \bfx = \bfy \mbox{ or } \bfx = -\bfy$; or, up to action of the group $G = \lbrace -1, 1 \rbrace$ on the vector space of signals $\bbR^d$. 

    The action of the group $S_2$ by row permutation on the vector space of matrices $\bbR^{2 \times n}$ is closely related to this sign retrieval problem as demonstrated in \cite{balan2023relationships}: indeed, $\beta_\bfA : \bbR^{2 \times n} \to \bbR^{2 \times D}$, 
    \begin{equation*}
        \beta_\bfA(\bfX) := \operatorname{sort}(\bfX\bfA), \qquad \bfX \in \bbR^{2 \times n},
    \end{equation*}
    where $\bfA \in \bbR^{n \times D}$ is a matrix with column vectors $(\bfa_i)_{i = 1}^D \in \bbR^n$, separates orbits if and only if all $\bfx \in \bbR^n$ can be uniquely recovered from the measurements \eqref{eq:phase_retrieval} up to a global sign.

    However, not only the orbit separating properties of $\beta_\bfA$ are related to a sign retrieval problem, also the bi-Lipschitz condition is related to the Lipschitz properties of the sign retrieval operator $\calA : \bbR^d / \lbrace -1, 1 \rbrace \to \bbR_+^D$, 
    \begin{equation*}
        \calA(\bfx)_i := \abs{ \langle \bfx, \bfa_i \rangle }, \qquad i \in [D].
    \end{equation*}
    Indeed, the Lipschitz constant for the embedding $\beta_\bfA$ is exactly the Lipschitz constant of $\calA$ and thus given by the largest singular value $\sigma_1(\bfA)$ of the matrix $\bfA$. Moreover, the lower Lipschitz constant for the embedding $\beta_\bfA$ is also exactly the lower Lipschitz constant of $\calA$ and thus exactly given by the quantity 
    \begin{equation}\label{eq:lower_Lipschitz_PR}
        c = \min_{S \subseteq [D]} \sqrt{ \sigma_n^2(\bfA_S) + \sigma_n^2(\bfA_{S^\rmc}) }
    \end{equation}
    as demonstrated in \cite{balan2023relationships,balan2015invertibility}. Here, $\sigma_n(\cdot)$ is used to denote the $n$-th singular value (in decreasing order) of a matrix and $\bfA_S \in \bbR^{n \times \abs{S}}$ denotes the matrix obtained by only keeping the columns whose indices are elements of $S \subseteq [D]$. 

    Equation~\eqref{eq:lower_Lipschitz_PR} can allow for relatively simple computation of the lower Lipschitz constant of the permutation-invariant embedding $\beta_\bfA$. Indeed, consider $\bfA \in \bbR^{2 \times 3}$ whose columns are given by the Mercedes-Benz frame 
    \begin{equation*}
        \bfa_1 = \begin{pmatrix}
            - \sqrt{3}/2 \\
            -1/2
        \end{pmatrix}, \qquad \bfa_2 = \begin{pmatrix}
            \sqrt{3}/2 \\
            -1/2
        \end{pmatrix}, \qquad \bfa_3 = \begin{pmatrix}
            0 \\
            1
        \end{pmatrix},
    \end{equation*}
    as an example. Then, it is a simple exercise to show that $c = \tfrac{1}{\sqrt{2}}$.
\end{example}

\section{Continuous maps factor through sorting based embeddings}

In this section, we prove Theorems~\ref{thm:cor_kirszbraun} and \ref{thm:cor_dugundji}. These two results are based on two well-known extension theorems: the Kirszbraun extension theorem and the Tietze extension theorem. 

The precise version of the Kirszbraun extension theorem that we are going to use is due to F.~A.~Valentine \cite{valentine1945Lipschitz}.

\begin{theorem}[Kirszbraun--Valentine extension theorem]
    Let $H_1, H_2$ be Hilbert spaces, let $S \subseteq H_1$ and let $g_0 : S \to H_2$ be a Lipschitz continuous map with Lipschitz constant $C > 0$. Then, $g_0$ can be extended to a Lipschitz continuous map $g : H_1 \to H_2$ with the same Lipschitz constant $C$.
\end{theorem}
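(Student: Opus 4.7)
The plan is to prove the Kirszbraun--Valentine theorem in three stages: a Zorn's lemma reduction to one-point extensions, a weak-compactness argument that reduces the one-point extension to a finite-intersection problem, and finally the finite intersection lemma which is the geometric heart of the proof.

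First, I would apply Zorn's lemma to the poset of all Lipschitz extensions of $g_0$ with the same Lipschitz constant $C$, ordered by inclusion of graphs. Every chain admits an upper bound given by the union of its graphs, so a maximal element $\tilde g : \tilde S \to H_2$ exists. If $\tilde S = H_1$ we are done; otherwise I would fix $x \in H_1 \setminus \tilde S$ and construct an extension of $\tilde g$ to $\tilde S \cup \{x\}$, contradicting maximality. Selecting a valid value $y = \tilde g(x) \in H_2$ amounts to showing that
\begin{equation*}
\bigcap_{s \in \tilde S} \bar B_{H_2}\bigl(\tilde g(s),\, C\norm{x - s}\bigr) \neq \emptyset.
\end{equation*}
Each ball in this intersection is closed, bounded and convex, hence weakly compact in $H_2$. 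By the finite intersection characterization of compactness, it then suffices to prove the corresponding statement for any finite subcollection.

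The crux is therefore the finite-intersection lemma (Kirszbraun's lemma): for any $s_1, \ldots, s_n \in \tilde S$, setting $y_i := \tilde g(s_i)$ and $r_i := C\norm{x - s_i}$, I must show that $\bigcap_{i=1}^n \bar B(y_i, r_i) \neq \emptyset$ using only the Lipschitz constraints $\norm{y_i - y_j} \leq C \norm{s_i - s_j}$. My approach is the classical dilation argument: set $\lambda^{\ast} := \inf\{\lambda \geq 0 : \bigcap_i \bar B(y_i, \lambda r_i) \neq \emptyset\}$, note that by strict convexity of the Hilbert norm the intersection at $\lambda = \lambda^{\ast}$ consists of a single point $z$, and argue that $z$ lies in the convex hull of the ``active'' centers (those $y_i$ with $\norm{z - y_i} = \lambda^{\ast} r_i$), since otherwise one could perturb $z$ toward the inactive side to shrink $\lambda^{\ast}$. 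Writing $z = \sum_i \mu_i y_i$ for a probability vector $\mu$ supported on active indices and expanding $\sum_{i,j} \mu_i \mu_j \norm{y_i - y_j}^2$ two ways via the parallelogram identity, then comparing with the analogous expansion for the $s_i$ using $\norm{y_i - y_j} \leq C \norm{s_i - s_j}$, would force $\lambda^{\ast} \leq 1$ and so yield the claim.

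This last step is where I expect the main obstacle. The dilation argument relies crucially on the full inner product structure --- the parallelogram identity is indispensable, and indeed the theorem fails for general Banach spaces --- and some care is needed to rigorously justify that $z$ must be a convex combination of the active centers (via a Lagrange multiplier or separating hyperplane argument applied to a suitable convex optimization formulation, e.g.\ minimizing $\max_i (\norm{z - y_i}^2 - r_i^2)$). Once the finite-intersection lemma is in hand, the weak-compactness and Zorn steps are routine soft analysis.
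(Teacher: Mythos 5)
The paper does not actually prove the Kirszbraun--Valentine theorem; it only states it and cites Valentine's 1945 paper as the source, then uses it as a black box in Corollary~\ref{cor:kirszbraun}. So there is no ``paper's own proof'' for your attempt to be compared against. Evaluated on its own merits, your outline is the standard and correct proof. The three-stage decomposition (Zorn's lemma to reduce to one-point extensions, weak compactness of closed balls to reduce to finite subcollections, and Kirszbraun's finite-intersection lemma via the dilation factor $\lambda^\ast$) is exactly the classical argument, and each step holds: the intersection of the closed balls is contained in any single one of them, so Banach--Alaoglu plus reflexivity gives the weak compactness; the infimum $\lambda^\ast$ is attained by another weak-compactness argument and lower semicontinuity of the norm; the uniqueness of $z$ at $\lambda^\ast$ follows from the parallelogram law as you indicate; the subdifferential/separating-hyperplane argument places $z$ in the convex hull of the active $y_i$; and the two expansions of $\sum_{i,j}\mu_i\mu_j\lVert y_i-y_j\rVert^2$ and $\sum_{i,j}\mu_i\mu_j\lVert s_i-s_j\rVert^2$ centered at $z$ and $x$ respectively, combined with $\lVert y_i-y_j\rVert\leq C\lVert s_i-s_j\rVert$, give $(\lambda^\ast)^2\sum_i\mu_i r_i^2\leq\sum_i\mu_i r_i^2$ and hence $\lambda^\ast\leq 1$. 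Two small points you glossed over but which are easy to supply: (i) you should note that all $r_i>0$ automatically, since $x\notin\tilde S$; (ii) your suggested objective $\max_i(\lVert z-y_i\rVert^2-r_i^2)$ is not quite the right one for locating $\lambda^\ast$---minimizing $\max_i\lVert z-y_i\rVert^2/r_i^2$ (or minimizing the radius-dilation directly) is what yields $z$ as a convex combination of the active centers with the weights needed for the final identity; the conclusion is the same but the bookkeeping differs.
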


We note that the above theorem implies that any Lipschitz continuous map factors through any lower Lipschitz continuous map in the following sense. 

\begin{corollary}\label{cor:kirszbraun}
    Let $X$ be a metric space, let $H_1, H_2$ be Hilbert spaces, let $h : X \to H_1$ be lower Lipschitz continuous with lower Lipschitz constant $c_h > 0$ and let $f : X \to H_2$ be Lipschitz continuous with Lipschitz constant $C_f > 0$. Then, there exists $g : H_1 \to H_2$ Lipschitz continuous with Lipschitz constant at most $C_f / c_h$ such that $f = g \circ h$.
\end{corollary}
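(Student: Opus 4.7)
The plan is to define $g$ first on the image $h(X) \subseteq H_1$ by the formula forced by $f = g \circ h$, verify that this partial definition is well-defined and Lipschitz with the desired constant, and then invoke the Kirszbraun--Valentine theorem to extend it to all of $H_1$.

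First I would set $g_0 : h(X) \to H_2$, $g_0(h(x)) := f(x)$. The key observation that makes this assignment unambiguous is that the lower Lipschitz inequality $c_h \, d_X(x,x') \leq \norm{h(x) - h(x')}_{H_1}$ forces $h(x) = h(x')$ to imply $d_X(x,x') = 0$, hence $x = x'$ (since $X$ is a metric space), and therefore $f(x) = f(x')$. So $g_0$ is a genuine function on the image of $h$.

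Next I would verify that $g_0$ is Lipschitz with constant $C_f / c_h$. For any $y, y' \in h(X)$, write $y = h(x)$ and $y' = h(x')$ and chain the two hypotheses:
\begin{equation*}
    \norm{g_0(y) - g_0(y')}_{H_2} = \norm{f(x) - f(x')}_{H_2} \leq C_f \, d_X(x,x') \leq \frac{C_f}{c_h} \norm{h(x) - h(x')}_{H_1} = \frac{C_f}{c_h} \norm{y - y'}_{H_1}.
\end{equation*}
Then I would apply the Kirszbraun--Valentine extension theorem to the subset $S := h(X) \subseteq H_1$ and the map $g_0$, producing a Lipschitz extension $g : H_1 \to H_2$ with Lipschitz constant at most $C_f / c_h$. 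By construction, $g(h(x)) = g_0(h(x)) = f(x)$ for every $x \in X$, which is the identity $f = g \circ h$.

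There is no real obstacle here; the only subtle point is the well-definedness of $g_0$, where one must remember that ``lower Lipschitz'' does not merely mean injective in the set-theoretic sense, but actually forces $d_X(x,x') = 0$ and thus $x = x'$ by the axioms of a metric space. Everything else is a mechanical inequality chain and one invocation of the quoted extension theorem.
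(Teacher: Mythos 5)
Your proposal is correct and mirrors the paper's proof: define $g_0$ on the range of $h$ via $g_0 := f \circ h^{-1}$ (well-defined since lower Lipschitz implies injective), check the Lipschitz bound by chaining the two inequalities, and extend by Kirszbraun--Valentine. The only cosmetic difference is that you spell out well-definedness directly rather than phrasing it via the inverse $h^{-1}$ on $\calR(h)$.
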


\begin{proof}
    Let $d_X$ denote the metric on $X$ and let $\norm{\cdot}_{H_i}$ denote the norm on $H_i$ where $i = 1,2$. 
    
    Since $h$ is lower Lipschitz continuous, it is injective and thus invertible on its range $\calR(h)$. We may therefore define $g_0 := f \circ h^{-1} : \calR(h) \subseteq H_1 \to H_2$. It clear that $g_0$ is Lipschitz continuous with Lipschitz constant at most $C_f/c_h$ since 
    \begin{align*}
        \norm{g_0(x) - g_0(y)}_{H_2} &= \norm{f(h^{-1}(x)) - f(h^{-1}(y))}_{H_2} \leq C_f d_X(h^{-1}(x), h^{-1}(y)) \\
        &\leq \frac{C_f}{c_h} \norm{x-y}_{H_1}
    \end{align*}
    for $x,y \in H_1$. Therefore, by the Kirszbraun--Valentine extension theorem, $g_0$ extends to $g : H_1 \to H_2$ Lipschitz continuous with Lipschitz constant at most $C_f/c_h$. Finally, we have $f = g \circ h$ by construction.
\end{proof}

\begin{remark}
    Most likely this corollary has been proven before or been included in a textbook as an exercise. We were unable to find a reference, however.
\end{remark}

We can now prove Theorem~\ref{thm:cor_kirszbraun}.

\begin{proof}[Proof of Theorem~\ref{thm:cor_kirszbraun}]
    Let $\norm{\cdot}_H$ denote the norm on $H$.
    
    Since $f$ is invariant under the action of $G$, it descends through the quotient to $\widehat f : V/{\sim} \to H$. Moreover, since $f$ is Lipschitz continuous with Lipschitz constant $C_f$, so is $\widehat f$: let $v,w \in V$ and let $g \in G$ be such that $\distqsp(v,w) = \normvsp{v - gw}$. Then, it holds that 
    \begin{equation*}
        \norm{\widehat f ([v]) - \widehat f ([w])}_H = \norm{f (v) - f (gw)}_H \leq C_f \normvsp{v-gw} = C_f \distqsp(v,w),
    \end{equation*}
    where $[v] = \set{gv}{ g \in \group}$ denotes the equivalence class of $v$ and $[w]$ denotes the equivalence class of $w$.

    If $\gamma : V \to \mathbb{R}^D$ separates orbits, then it satisfies the bi-Lipschitz condition according to our main theorem. In particular, since $\gamma$ is also invariant under the action of $G$, it also descends through the quotient to $\widehat \gamma : V/{\sim} \to H$ and $\widehat \gamma$ is lower Lipschitz continuous with lower Lipschitz constant $c > 0$.

    By Corollary~\ref{cor:kirszbraun}, there exists $g : \bbR^D \to H$ Lipschitz continuous with Lipschitz constant at most $C_f/c$ such that $\widehat f = g \circ \widehat \gamma$. Letting $v \in W$, we also get 
    \begin{equation*}
        f(v) = \widehat f([v]) = g(\widehat \gamma([v])) = g(\gamma(v)).
    \end{equation*}
\end{proof}

Similarly, the precise version of the Tietze extension theorem that we are going to use is due to J.~Dugundji \cite{dugundji1951extension}.

\begin{theorem}[Dugundji--Tietze extension theorem]
    Let $Y$ be a metric space, let $Z$ be a locally convex space, let $S \subseteq Y$ be a closed subset of $Y$ and let $g_0 : S \to Z$ be a continuous map. Then, $g_0$ can be extended to a continuous map $g : Y \to Z$ in such a way that $g(Y)$ is a subset of the convex hull of $g_0(S)$.
\end{theorem}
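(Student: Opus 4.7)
The Dugundji--Tietze extension theorem is a classical result in general topology, quoted here directly by the authors from \cite{dugundji1951extension}. Accordingly, the most economical ``proof'' in the paper would simply be a reference to the original paper, and I would expect no original argument to appear in this work. For completeness, however, I would sketch Dugundji's standard partition-of-unity construction.

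The plan is to extend $g_0$ from the closed set $S$ to the open complement $U = Y \setminus S$ by a convex combination formula, exploiting that every metric space is paracompact. First I would produce a locally finite open refinement $\{W_\alpha\}_{\alpha \in A}$ of the canonical cover $\{B(y, \operatorname{dist}(y, S)/2) : y \in U\}$ of $U$, together with a subordinate partition of unity $\{\phi_\alpha\}_{\alpha \in A}$. For each $\alpha \in A$ I would fix a base point $y_\alpha \in W_\alpha$ with $W_\alpha \subseteq B(y_\alpha, \operatorname{dist}(y_\alpha, S)/2)$, and pick a near point $s_\alpha \in S$ satisfying $d_Y(y_\alpha, s_\alpha) \le 2 \operatorname{dist}(y_\alpha, S)$. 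I would then define
\begin{equation*}
    g(y) := \begin{cases}
        g_0(y), & y \in S, \\[2pt]
        \displaystyle\sum_{\alpha \in A} \phi_\alpha(y)\, g_0(s_\alpha), & y \in U.
    \end{cases}
\end{equation*}

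Local finiteness of $\{\phi_\alpha\}$ makes the sum a locally finite combination of continuous maps, so $g$ is continuous on the open set $U$; and by construction $g(Y) \subseteq \operatorname{conv}(g_0(S))$, since on $U$ each value is a convex combination of elements of $g_0(S)$ while on $S$ we have $g(y) = g_0(y) \in g_0(S)$. The main obstacle, and the technical heart of Dugundji's proof, is continuity of $g$ at points $s \in S$. Here a triangle-inequality bookkeeping shows that for $y \in U$ close to $s$, every $\alpha$ with $y \in W_\alpha$ satisfies $d_Y(s_\alpha, s) \le C\, d_Y(y, s)$ for some absolute constant $C$, so each $g_0(s_\alpha)$ appearing in the sum defining $g(y)$ lies in a prescribed neighbourhood of $g_0(s)$. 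Local convexity of $Z$ enters precisely here: given a continuous seminorm on $Z$ one may shrink to a convex neighbourhood of $g_0(s)$ using continuity of $g_0$, and then the convex combination $g(y) = \sum_\alpha \phi_\alpha(y) g_0(s_\alpha)$ automatically stays inside that neighbourhood. This ``closing under convex combinations'' step is the sole place the locally convex hypothesis is genuinely used, and it is the most delicate part of the argument.
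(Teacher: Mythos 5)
You correctly observe that the paper states this result with a citation to Dugundji's original article \cite{dugundji1951extension} and supplies no proof, so there is nothing in the paper to compare against; your sketch of the partition-of-unity construction is a faithful outline of Dugundji's argument. One small inaccuracy worth fixing: the point $y_\alpha$ need not lie in $W_\alpha$ — it is the centre of the ball $B(y_\alpha, \operatorname{dist}(y_\alpha, S)/2)$ from the original cover into which the refining set $W_\alpha$ is included — but this does not affect the logic of the sketch.
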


We note that this immediately implies that any continuous function factors through any injective, relatively open function with closed range.

\begin{corollary}\label{cor:dugundji}
    Let $X$ be a topological space, let $Y$ be a metric space and let $Z$ be a locally convex space. Let $h : X \to Y$ be an injective, relatively open map with closed range and let $f : X \to Z$ be continuous. Then, there exists $g : Y \to Z$ continuous such that $g(Y)$ is a subset of the convex hull of $f(X)$ and such that $f = g \circ h$.
\end{corollary}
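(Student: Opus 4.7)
The plan is to mimic the proof of Corollary~\ref{cor:kirszbraun}, replacing the Kirszbraun--Valentine theorem with the Dugundji--Tietze theorem. The essential observation is that the hypotheses on $h$ (injective, relatively open, with closed range) are precisely what is needed to (i) define an inverse on the range of $h$, (ii) make that inverse continuous, and (iii) apply Dugundji--Tietze on the closed set $h(X) \subseteq Y$.

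First I would let $h^{-1} : h(X) \to X$ denote the inverse of $h$ on its range; this is well-defined because $h$ is injective. Since $h : X \to h(X)$ is a continuous bijection that is open onto its image (this is what ``relatively open'' means), the inverse $h^{-1}$ is continuous. I would then set
\begin{equation*}
    g_0 := f \circ h^{-1} : h(X) \to Z,
\end{equation*}
which is continuous as a composition of continuous maps. Note that $g_0(h(X)) = f(X)$ by construction.

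Next, since $h(X)$ is closed in the metric space $Y$ and $g_0$ is continuous into the locally convex space $Z$, the Dugundji--Tietze extension theorem produces a continuous extension $g : Y \to Z$ of $g_0$ such that $g(Y)$ is contained in the convex hull of $g_0(h(X)) = f(X)$. Finally, for any $x \in X$, we have $g(h(x)) = g_0(h(x)) = f(h^{-1}(h(x))) = f(x)$, so $f = g \circ h$, and the containment $g(Y) \subseteq \operatorname{conv}(f(X))$ is exactly the desired conclusion.

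The argument contains no genuine obstacle; the only points that need a moment of care are verifying that the three hypotheses on $h$ correspond, one for one, to the three requirements of the Dugundji--Tietze framework (existence of the inverse, its continuity, and closedness of the domain of $g_0$). Everything else is formal.
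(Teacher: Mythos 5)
Your proof is correct and follows the same route as the paper: invert $h$ on its range, observe that $g_0 = f \circ h^{-1}$ is continuous because $h$ is relatively open, and then apply the Dugundji--Tietze extension theorem on the closed set $h(X) \subseteq Y$. One small caveat: you assert that $h : X \to h(X)$ is a continuous bijection, but continuity of $h$ is neither assumed in the statement nor needed in the argument --- only the relative openness of $h$ is used, and it alone makes $h^{-1}$ continuous.
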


\begin{proof}
    Since $h$ is injective, it is invertible on its range $\calR(h)$. We may therefore define $g_0 := f \circ h^{-1} : \calR(h) \subseteq Y \to Z$. Since $f$ is continuous and $h$ is relatively open, it follows that $g_0$ is continuous. Since the range of $h$ is closed, the Dugundji--Tietze extension theorem implies that $g_0$ extends to a continuous function $g : Y \to Z$ such that $g(Y)$ is contained in the convex hull of $g_0(\calR(h)) = f(X)$. Finally, we have $f = g \circ h$ by construction.
\end{proof}

\begin{remark}
    Again, this corollary has most likely been proven or, at least, mentioned before but we were unable to find a reference.
\end{remark}

We can now prove Theorem~\ref{thm:cor_dugundji}.

\begin{proof}[Proof of Theorem~\ref{thm:cor_dugundji}]
    Again, $f$ descends through the quotient to $\widehat f : V/{\sim} \to Z$ and, since $f$ is continuous, so is $\widehat f$. Because $\gamma$ separates orbits, our main theorem implies that $\gamma$ satisfies the bi-Lipschitz condition. Of course, $\gamma$ descends through the quotient to $\widehat \gamma : V/{\sim} \to H$ and, since $\gamma$ satisfies the bi-Lipschitz condition, $\widehat \gamma$ is bi-Lipschitz such that its range $\calR(\widehat \gamma)$ is closed and $\widehat \gamma$ is injective. Therefore, $\widehat \gamma$ is invertible on its range $\calR(\widehat \gamma)$ and its inverse is Lipschitz continuous on $\calR(\widehat \gamma)$: it follows that $\widehat \gamma$ is relatively open. By Corollary~\ref{cor:dugundji}, there exists $g : \bbR^D \to Z$ continuous such that $g(\bbR^D)$ is a subset of the convex hull of $\widehat f (V/{\sim}) = f(V)$ and $\widehat f = g \circ \widehat \gamma$. As before, we get $f = g \circ \gamma$.
\end{proof}

\section{Preliminaries for the proof of \texorpdfstring{Theorem~\ref{thm:main}}{the main theorem}}

\subsection{Sorting coorbits}

\DeclarePairedDelimiterXPP\permsort[1]{\calL}(){}{#1}
\DeclarePairedDelimiterXPP\stabsort[1]{\calH}(){}{#1}
\DeclarePairedDelimiterXPP\disthyperplane[1]{\delta}(){}{#1}
\DeclarePairedDelimiterXPP\diffminmax[1]{\Delta}(){}{#1}

Denote the set of permutations that sort a vector $\bfx \in \bbR^\ordgroup$ by 
\begin{equation*}
    \permsort{\bfx} := \set{ \sigma \in S_\ordgroup }{ \sigma \bfx = \sort{ \bfx } }.
\end{equation*}
Let us furthermore denote the stabiliser of $\bfx$ by 
\begin{equation*}
    \stabsort{\bfx} := \set{ \sigma \in S_\ordgroup }{ \sigma \bfx = \bfx }.
\end{equation*}
The sets $\permsort{\bfx}$ and $\stabsort{\bfx}$ are related in the following way.

\begin{proposition}\label{prop:sorting_stabiliser}
    Let $\bfx \in \bbR^\ordgroup$ and $\sigma \in \permsort{\bfx}$. Then, $\permsort{\bfx} = \sigma \stabsort{\bfx}$.
\end{proposition}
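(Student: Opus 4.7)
The plan is to prove the two inclusions directly; this is essentially the standard orbit--stabiliser fact that the set of group elements sending one point to a given target is a single coset of the stabiliser, specialised to the target $\sort{\bfx}$.

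First I would unpack the definitions. By hypothesis $\sigma \in \permsort{\bfx}$ means $\sigma \bfx = \sort{\bfx}$, so any other $\tau \in \permsort{\bfx}$ satisfies $\tau \bfx = \sort{\bfx} = \sigma \bfx$. Left-multiplying by $\sigma^{-1}$ (acting on $\bbR^\ordgroup$ via permutation matrices, so $\sigma^{-1}\sigma = \operatorname{id}$) gives $(\sigma^{-1}\tau)\bfx = \bfx$, whence $\sigma^{-1}\tau \in \stabsort{\bfx}$ and therefore $\tau \in \sigma \stabsort{\bfx}$. This yields $\permsort{\bfx} \subseteq \sigma \stabsort{\bfx}$.

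For the reverse inclusion I would take an arbitrary $\eta \in \stabsort{\bfx}$ and check that $\sigma\eta$ sorts $\bfx$: indeed $(\sigma\eta)\bfx = \sigma(\eta\bfx) = \sigma\bfx = \sort{\bfx}$, so $\sigma\eta \in \permsort{\bfx}$, giving $\sigma \stabsort{\bfx} \subseteq \permsort{\bfx}$. Combining the two inclusions completes the proof.

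I don't anticipate any real obstacle here; the only point requiring a bit of care is a consistent convention for how $S_\ordgroup$ acts on $\bbR^\ordgroup$ (so that associativity $(\sigma\eta)\bfx = \sigma(\eta\bfx)$ and the identity $\sigma^{-1}\sigma = \operatorname{id}$ apply without ambiguity), which is already implicit in the definitions of $\permsort{\bfx}$ and $\stabsort{\bfx}$.
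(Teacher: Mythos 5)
Your argument is correct, and it is the standard coset argument: writing $\tau\bfx = \sort{\bfx} = \sigma\bfx$ gives $\sigma^{-1}\tau \in \stabsort{\bfx}$, and conversely $(\sigma\eta)\bfx = \sigma\bfx = \sort{\bfx}$ for any $\eta$ in the stabiliser. The paper in fact states Proposition~\ref{prop:sorting_stabiliser} without proof, evidently treating it as routine, so your write-up simply supplies the (correct) omitted argument.
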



Next, we may denote 
\begin{equation*}
    \disthyperplane{\bfx} := \min_{\substack{i,j \in [\ordgroup] \\ x_i \neq x_j}} \abs{x_i - x_j}.
\end{equation*}
If the vector $\bfx$ is \emph{not} constant, then $\disthyperplane{\bfx}$ is exactly a scalar multiple of the distance of $\bfx$ to the next hyperplane of the form $\set{ \bfx \in \bbR^\ordgroup }{ x_i = x_j }$, for $i,j \in [\ordgroup]$ with $i < j$ that does \emph{not} contain $\bfx$. 

\begin{proposition}\label{prop:reexpress_d}
    Let $\bfx \in \bbR^\ordgroup$. Then, 
    \begin{equation*}
        \disthyperplane{\bfx} = \min_{\sigma \in S_\ordgroup \setminus \stabsort{\bfx}} \norm{(\sigma-e) \bfx}_\infty = \min_{\sigma \in S_\ordgroup \setminus \permsort{\bfx}} \norm{\sigma \bfx - \sort{\bfx}}_\infty.
    \end{equation*}
\end{proposition}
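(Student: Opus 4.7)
The plan is to establish the two claimed equalities in sequence, with the second one reduced to the first by means of Proposition~\ref{prop:sorting_stabiliser}. Throughout I would let permutations act on $\bbR^\ordgroup$ by $(\sigma \bfx)_i = x_{\sigma^{-1}(i)}$; everything would go through for the other convention as well. The edge case where $\bfx$ is constant is benign: both $\stabsort{\bfx}$ and $\permsort{\bfx}$ equal $S_\ordgroup$, and all three quantities are $\min \emptyset = +\infty$.

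For the first equality $\disthyperplane{\bfx} = \min_{\sigma \in S_\ordgroup \setminus \stabsort{\bfx}} \norm{(\sigma-e)\bfx}_\infty$, I would prove each inequality separately. For ``$\geq$'', pick indices $i_0, j_0 \in [\ordgroup]$ with $x_{i_0} \neq x_{j_0}$ realising the minimum defining $\disthyperplane{\bfx}$, and let $\sigma$ be the transposition $(i_0\,j_0)$. Then $\sigma \notin \stabsort{\bfx}$ and a direct computation shows that $(\sigma-e)\bfx$ has entries $x_{j_0}-x_{i_0}$ and $x_{i_0}-x_{j_0}$ at positions $i_0$ and $j_0$, and zero elsewhere, giving $\norm{(\sigma-e)\bfx}_\infty = |x_{i_0}-x_{j_0}| = \disthyperplane{\bfx}$. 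For ``$\leq$'', take any $\sigma \in S_\ordgroup \setminus \stabsort{\bfx}$: by definition there exists an index $i$ with $x_{\sigma^{-1}(i)} \neq x_i$, so $\norm{(\sigma-e)\bfx}_\infty \geq |x_{\sigma^{-1}(i)} - x_i| \geq \disthyperplane{\bfx}$.

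For the second equality, I would fix any $\tau \in \permsort{\bfx}$ so that $\sort{\bfx} = \tau\bfx$, and substitute $\rho := \tau^{-1}\sigma$. By Proposition~\ref{prop:sorting_stabiliser}, $\permsort{\bfx} = \tau\stabsort{\bfx}$, hence the map $\sigma \mapsto \tau^{-1}\sigma$ is a bijection from $S_\ordgroup \setminus \permsort{\bfx}$ onto $S_\ordgroup \setminus \stabsort{\bfx}$. Moreover,
\begin{equation*}
    \sigma\bfx - \sort{\bfx} = \tau\rho\,\bfx - \tau\bfx = \tau(\rho - e)\bfx,
\end{equation*}
and since $\tau$ merely permutes the coordinates it preserves the $\infty$-norm, giving $\norm{\sigma\bfx - \sort{\bfx}}_\infty = \norm{(\rho-e)\bfx}_\infty$. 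Taking the minimum over $\sigma \in S_\ordgroup \setminus \permsort{\bfx}$ on the left and over $\rho \in S_\ordgroup \setminus \stabsort{\bfx}$ on the right, the result follows from the first equality.

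There is no serious obstacle here; the only care needed is keeping the action convention consistent and correctly using Proposition~\ref{prop:sorting_stabiliser} to transport between the stabiliser coset $\tau\stabsort{\bfx}$ and $\stabsort{\bfx}$ itself.
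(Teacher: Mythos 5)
Your proof is correct. The paper itself omits the proof of this proposition (as well as of Propositions~\ref{prop:sorting_stabiliser} and~\ref{prop:useless_inequality}), so there is nothing to compare against; your argument — establishing the first equality by a transposition construction for the ``$\geq$'' direction and the observation that any non-stabilising $\sigma$ moves some coordinate for the ``$\leq$'' direction, then reducing the second equality to the first via the coset bijection $\sigma \mapsto \tau^{-1}\sigma$ from Proposition~\ref{prop:sorting_stabiliser} together with the permutation-invariance of $\norm{\cdot}_\infty$ — is the natural one and handles the constant-vector edge case appropriately.
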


Finally, let us denote the difference of the maximum and minimum of a vector $\bfx$ by 
\begin{equation*}
    \diffminmax{\bfx} := \max_{i \in [\ordgroup]} x_i - \min_{i \in [\ordgroup]} x_i = \max_{i,j \in [\ordgroup]} \abs{x_i - x_j}.
\end{equation*}
Introducing the matrix $\bfD \in \bbR^{\ordgroup(\ordgroup-1)/2 \times \ordgroup}$,
\begin{equation*}
    \bfD = \begin{pNiceMatrix}
        1       & -1        &           &           &       \\
        \Vdots  &           & -1        &           &       \\
        \Vdots  &           &           & \Ddots    &       \\
        1       &           &           &           & -1    \\
        0       & 1         & -1        &           &       \\
        \Vdots  & \Vdots    &           & \Ddots    &       \\
        0       & 1         &           &           & -1    \\
                &           & \vdots    &           &       \\
        0       & \Cdots    & 0         & 1         & -1
    \CodeAfter
    \SubMatrix.{1-1}{4-5}\}[right-xshift=1em,name=a]
    \tikz \node [right] at (a-right.east) {$\ordgroup-1$};
    \SubMatrix.{5-1}{7-5}\}[right-xshift=1em,name=b]
    \tikz \node [right] at (b-right.east) {$\ordgroup-2$};
    \SubMatrix.{9-1}{9-5}\}[right-xshift=1em,name=c]
    \tikz \node [right] at (c-right.east) {$1$};
    \end{pNiceMatrix},
\end{equation*}
we may write $\diffminmax{\bfx} = \norm{\bfD \bfx}_\infty$, which shows that $\diffminmax{\cdot}$ satisfies the triangle inequality and is absolutely homogeneous. Finally, we note the following simple inequality.

\begin{proposition}\label{prop:useless_inequality}
    Let $\bfx \in \bbR^\ordgroup$. Then, $\disthyperplane{\bfx} \leq \diffminmax{\bfx}$.
\end{proposition}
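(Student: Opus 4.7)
The plan is to prove this by directly chasing the definitions: $\disthyperplane{\bfx}$ is a minimum over a certain subset of index pairs, and $\diffminmax{\bfx}$ is a maximum over all index pairs, so picking a single admissible pair should sandwich the two quantities.

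First I would dispense with the degenerate case. If $\bfx$ is constant, the index set $\set{(i,j) \in [\ordgroup]^2}{x_i \neq x_j}$ is empty, so under the usual convention $\min \emptyset = +\infty$ the claimed inequality fails. This is consistent with Proposition~\ref{prop:reexpress_d}, where $S_\ordgroup \setminus \stabsort{\bfx} = \emptyset$ in the constant case and the min there is also vacuously $+\infty$. I would therefore read the proposition as tacitly assuming $\bfx$ is not constant, matching the discussion immediately preceding it.

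Second, under the assumption that $\bfx$ is not constant, I would pick any pair $(i_0, j_0) \in [\ordgroup]^2$ with $x_{i_0} \neq x_{j_0}$; such a pair exists by hypothesis. The argument then collapses to the one-line chain
$$\disthyperplane{\bfx} \;\leq\; \abs{x_{i_0} - x_{j_0}} \;\leq\; \diffminmax{\bfx},$$
where the first inequality holds because $\abs{x_{i_0} - x_{j_0}}$ is one of the nonnegative quantities over which the minimum defining $\disthyperplane{\bfx}$ is taken, and the second inequality holds because $\diffminmax{\bfx} = \max_{i,j \in [\ordgroup]} \abs{x_i - x_j}$ upper-bounds every individual $\abs{x_i - x_j}$.

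There is no substantive obstacle. The argument is purely definitional, uses neither Proposition~\ref{prop:sorting_stabiliser} nor Proposition~\ref{prop:reexpress_d}, and the only point that requires any care at all is fixing the convention for the constant vector.
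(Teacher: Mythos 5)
Your proof is correct; the paper omits a proof of this proposition entirely, and your one-line chain $\disthyperplane{\bfx} \leq \abs{x_{i_0}-x_{j_0}} \leq \diffminmax{\bfx}$ is the natural argument. Your remark about the constant case is a fair point of care: under the standard convention $\min\emptyset = +\infty$ one would have $\disthyperplane{\bfx} = +\infty$ while $\diffminmax{\bfx} = 0$, so the inequality must be read as tacitly restricted to non-constant $\bfx$, consistent with the paper's own parenthetical remark immediately before the proposition.
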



Inspired by \cite{balan2023GI}, we show a score of set inclusions and equalities. Let us start by proving the following simple inclusions (cf.~\cite[Lemma~2.5 on p.~7]{balan2023GI}).

\begin{lemma}\label{lem:simple_inclusion:sec:D}
    Let $\bfx,\bfy \in \bbR^\ordgroup$ be such that $\diffminmax{\bfy} < \disthyperplane{\bfx}$. Then, $\permsort{\bfx + \bfy} \subseteq \permsort{\bfx}$ and $\stabsort{\bfx + \bfy} \subseteq \stabsort{\bfx}$.
\end{lemma}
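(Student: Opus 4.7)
The plan is to prove each inclusion by contradiction, in both cases converting the hypothesis ``$\sigma$ is not in the stabiliser/sort-set of $\bfx$'' into a lower bound on some $\ell^\infty$ norm via Proposition~\ref{prop:reexpress_d}, and then bounding the $\bfy$-side by $\diffminmax{\bfy}$ through the elementary observation that $\norm{(e-\sigma)\bfy}_\infty = \max_i |y_i - y_{\sigma(i)}| \leq \diffminmax{\bfy}$ for every $\sigma \in S_\ordgroup$.

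For the stabiliser inclusion $\stabsort{\bfx+\bfy} \subseteq \stabsort{\bfx}$, I would take $\sigma \in \stabsort{\bfx+\bfy}$, so that $\sigma(\bfx+\bfy) = \bfx+\bfy$, and rewrite this identity as $(\sigma-e)\bfx = (e-\sigma)\bfy$. If $\sigma$ were not in $\stabsort{\bfx}$, the first identity of Proposition~\ref{prop:reexpress_d} would give $\disthyperplane{\bfx} \leq \norm{(\sigma-e)\bfx}_\infty = \norm{(e-\sigma)\bfy}_\infty \leq \diffminmax{\bfy}$, contradicting the hypothesis $\diffminmax{\bfy} < \disthyperplane{\bfx}$.

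For the sort-set inclusion $\permsort{\bfx+\bfy} \subseteq \permsort{\bfx}$, I would take $\sigma \in \permsort{\bfx+\bfy}$ and show that $\sigma\bfx$ is monotonically non-increasing, which—because $\sigma$ merely permutes the entries of $\bfx$ and $\sort{\cdot}$ produces a unique decreasing arrangement—already forces $\sigma\bfx = \sort{\bfx}$. Supposing towards contradiction that $(\sigma\bfx)_i < (\sigma\bfx)_{i+1}$ for some $i$, I would note that these two entries of $\bfx$ are then distinct, so by definition of $\disthyperplane{\bfx}$ their gap satisfies $(\sigma\bfx)_{i+1} - (\sigma\bfx)_i \geq \disthyperplane{\bfx}$, while $(\sigma\bfy)_i - (\sigma\bfy)_{i+1} \leq \diffminmax{\bfy} < \disthyperplane{\bfx}$; adding these would yield $(\sigma(\bfx+\bfy))_{i+1} > (\sigma(\bfx+\bfy))_i$, contradicting the fact that $\sigma$ sorts $\bfx+\bfy$ in decreasing order.

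I do not anticipate any genuine obstacle. The main subtleties to keep track of are the sign in the stabiliser argument and the observation in the second inclusion that monotonicity alone already identifies $\sigma\bfx$ with $\sort{\bfx}$, so one does not need to reason directly about the second identity in Proposition~\ref{prop:reexpress_d}.
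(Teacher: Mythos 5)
Your proof is correct; the first part (the $\permsort$ inclusion) is essentially the paper's argument, comparing adjacent entries and using the estimate $\disthyperplane{\bfx} - \diffminmax{\bfy} > 0$ to rule out any inversion of $\bfx$ surviving the addition of $\bfy$. Where you diverge is the stabiliser inclusion: the paper does not argue it directly but instead deduces it from the already-established $\permsort$ inclusion via Proposition~\ref{prop:sorting_stabiliser} (pick $\sigma \in \permsort{\bfx+\bfy} \subseteq \permsort{\bfx}$ and translate both sort-sets to stabilisers by $\sigma^{-1}$). You instead give a self-contained argument: rewrite $\sigma(\bfx+\bfy)=\bfx+\bfy$ as $(\sigma-e)\bfx = (e-\sigma)\bfy$, invoke the first identity in Proposition~\ref{prop:reexpress_d} to lower-bound $\norm{(\sigma-e)\bfx}_\infty$ by $\disthyperplane{\bfx}$ when $\sigma \notin \stabsort{\bfx}$, and observe $\norm{(e-\sigma)\bfy}_\infty \leq \diffminmax{\bfy}$. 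Both are valid. Your version is slightly more symmetric and actually puts Proposition~\ref{prop:reexpress_d} to work (the paper states that proposition but does not use it in this proof), at the cost of repeating an $\ell^\infty$-estimate that the paper's route gets for free from the group structure of $\stabsort{\bfx}$ captured in Proposition~\ref{prop:sorting_stabiliser}.
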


\begin{proof}
    We show that $\permsort{\bfx}^\rmc \subseteq \permsort{\bfx + \bfy}^\rmc$: if $\sigma \not\in \permsort{\bfx}$, then we may find $i,j \in [\ordgroup]$ such that $i < j$ and $x_{\sigma(i)} < x_{\sigma(j)}$. Therefore, 
    \begin{align*}
        x_{\sigma(j)} + y_{\sigma(j)} - x_{\sigma(i)} - y_{\sigma(i)} &\geq \min_{\substack{i,j \in [\ordgroup] \\ x_i \neq x_j}} \abs{x_i - x_j} + \min_{i \in [\ordgroup]} y_i - \max_{i \in [\ordgroup]} y_i \\
        &= \disthyperplane{\bfx} - \diffminmax{\bfy} > 0
    \end{align*}
    such that $\sigma \not\in \permsort{\bfx + \bfy}$. Finally, pick $\sigma \in \permsort{\bfx + \bfy} \subseteq \permsort{\bfx}$ and note that $\stabsort{\bfx + \bfy} = \sigma^{-1} \permsort{\bfx + \bfy} \subseteq \sigma^{-1} \permsort{\bfx} = \stabsort{\bfx}$ according to Proposition~\ref{prop:sorting_stabiliser}.
\end{proof}

Using this lemma, we can show that the set of sorting permutations does not change along certain straight line segments (cf.~\cite[Lemma~2.6 on p.~8]{balan2023GI}).

\begin{lemma}\label{lem:equality_along_interval:sec:D}
    Let $\bfx, \bfy \in \bbR^\ordgroup$ be such that $\diffminmax{\bfy} < \disthyperplane{\bfx}$. Then, $\permsort{ \bfx + t \bfy } = \permsort{\bfx + \bfy}$ and $\stabsort{ \bfx + t \bfy } = \stabsort{\bfx + \bfy}$ for $t \in (0,1]$. 
\end{lemma}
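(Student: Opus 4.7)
The plan is to prove the two equalities in tandem, reducing to a careful check of the defining inequalities for sorting permutations coordinate by coordinate. The first step is to observe that $\diffminmax{t\bfy} = t \diffminmax{\bfy} \leq \diffminmax{\bfy} < \disthyperplane{\bfx}$ for every $t \in [0,1]$, so Lemma~\ref{lem:simple_inclusion:sec:D} (applied with $t\bfy$ in place of $\bfy$) already gives the containments $\permsort{\bfx + t \bfy} \subseteq \permsort{\bfx}$ and $\stabsort{\bfx + t\bfy} \subseteq \stabsort{\bfx}$ for every $t \in [0,1]$. In particular $\permsort{\bfx+\bfy} \subseteq \permsort{\bfx}$ and every permutation under consideration is a sorting permutation of the ``anchor'' $\bfx$.

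The key step is then to prove the inclusion $\permsort{\bfx + \bfy} \subseteq \permsort{\bfx + t\bfy}$ for $t \in (0,1]$. I would pick $\sigma \in \permsort{\bfx+\bfy}$ and check, for each pair $i < j$, that $(\bfx + t\bfy)_{\sigma(i)} \geq (\bfx + t\bfy)_{\sigma(j)}$. Since $\sigma \in \permsort{\bfx}$, there are two cases. If $x_{\sigma(i)} > x_{\sigma(j)}$, then $x_{\sigma(i)} - x_{\sigma(j)} \geq \disthyperplane{\bfx}$, while $|t(y_{\sigma(i)} - y_{\sigma(j)})| \leq t \diffminmax{\bfy} < \disthyperplane{\bfx}$, so the required inequality is strict. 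If $x_{\sigma(i)} = x_{\sigma(j)}$, the inequality reduces to $y_{\sigma(i)} \geq y_{\sigma(j)}$, which follows directly from $\sigma \in \permsort{\bfx + \bfy}$ after cancelling the equal $x$-coordinates.

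The reverse inclusion $\permsort{\bfx + t\bfy} \subseteq \permsort{\bfx + \bfy}$ follows by essentially the same dichotomy: for $\sigma$ in the left-hand side and a pair $i<j$, either $x_{\sigma(i)} > x_{\sigma(j)}$ (handled by the same gap argument) or $x_{\sigma(i)} = x_{\sigma(j)}$, in which case $\sigma \in \permsort{\bfx+t\bfy}$ together with $t>0$ yields $y_{\sigma(i)} \geq y_{\sigma(j)}$, hence $(\bfx+\bfy)_{\sigma(i)} \geq (\bfx+\bfy)_{\sigma(j)}$. This is the place where the hypothesis $t > 0$ is genuinely used; at $t=0$ the stabiliser may strictly enlarge since ties in $\bfx$ are not broken by $\bfy$.

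Finally, the stabiliser equality follows without further work from Proposition~\ref{prop:sorting_stabiliser}: picking any $\sigma$ in the common set $\permsort{\bfx+\bfy} = \permsort{\bfx+t\bfy}$, we obtain
\[
    \stabsort{\bfx + t\bfy} = \sigma^{-1} \permsort{\bfx + t\bfy} = \sigma^{-1} \permsort{\bfx + \bfy} = \stabsort{\bfx + \bfy}.
\]
The only real obstacle is the bookkeeping of the equality-case $x_{\sigma(i)} = x_{\sigma(j)}$; everything else is a one-line estimate built on $\diffminmax{t\bfy} \leq \diffminmax{\bfy} < \disthyperplane{\bfx}$.
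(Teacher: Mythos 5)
Your proof is correct and takes essentially the same approach as the paper: both rely on Lemma~\ref{lem:simple_inclusion:sec:D} to place all sorting permutations inside $\permsort{\bfx}$, and both use the dichotomy $x_{\sigma(i)} = x_{\sigma(j)}$ versus $x_{\sigma(i)} > x_{\sigma(j)}$. The only cosmetic difference is that for $\permsort{\bfx+\bfy} \subseteq \permsort{\bfx+t\bfy}$ the paper shortcuts via the convex combination $\bfx + t\bfy = t(\bfx+\bfy) + (1-t)\bfx$ (a $\sigma$ sorting both endpoints sorts the combination), whereas you run the same case analysis symmetrically in both directions.
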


\begin{proof}
    \begin{proof}[$\subseteq$]\let\qed\relax
        Let $\sigma \in \permsort{ \bfx + t \bfy }$. Then, Lemma~\ref{lem:simple_inclusion:sec:D} implies that $\sigma \in \permsort{\bfx}$. So, let $i,j \in [\ordgroup]$ be such that $i < j$. There are two cases: either $x_{\sigma(i)} = x_{\sigma(j)}$ in which case 
        \begin{align*}
            (\bfx + \bfy)_{\sigma(i)} - (\bfx + \bfy)_{\sigma(j)} &= y_{\sigma(i)} - y_{\sigma(j)} = t^{-1} (ty_{\sigma(i)} - ty_{\sigma(j)}) \\
            &= t^{-1} \left( (\bfx + t\bfy)_{\sigma(i)} - (\bfx + t\bfy)_{\sigma(j)} \right) \geq 0;
        \end{align*}
        or $x_{\sigma(i)} > x_{\sigma(j)}$ in which case 
        \begin{align*}
            (\bfx + \bfy)_{\sigma(i)} - (\bfx + \bfy)_{\sigma(j)} &= x_{\sigma(i)} - x_{\sigma(j)} + y_{\sigma(i)} - y_{\sigma(j)} \geq \disthyperplane{\bfx} - \diffminmax{\bfy} > 0.
        \end{align*}
        Either way, $(\bfx + \bfy)_{\sigma(i)} \geq (\bfx + \bfy)_{\sigma(j)}$. Since $i,j \in [\ordgroup]$ were arbitrary, we conclude that $\sigma \in \permsort{\bfx + \bfy}$.
    \end{proof}

    \begin{proof}[$\supseteq$]\let\qed\relax
        The reverse inclusion follows because $\bfx + t \bfy = t (\bfx + \bfy) + (1-t) \bfy$ is a convex combination of $\bfx + \bfy$ and $\bfx$. More precisely, we have that $\sigma \in \permsort{\bfx + \bfy} \subseteq \permsort{\bfx}$ such that 
        \begin{align*}
            (\bfx+t\bfy)_{\sigma(i)} &= t (\bfx+\bfy)_{\sigma(i)} + (1-t) x_{\sigma(i)} \\
            &\geq t (\bfx+\bfy)_{\sigma(j)} + (1-t) x_{\sigma(j)} = (\bfx+t\bfy)_{\sigma(j)}
        \end{align*}
        for all $i,j \in [\ordgroup]$ such that $i < j$. Therefore, $\sigma \in \permsort{ \bfx + t \bfy }$.
    \end{proof}

    The equality for the stabilisers follows from the equality for the permutations that sort by Proposition~\ref{prop:sorting_stabiliser}.
\end{proof}

Next, we prove that the set of permutations that sort is stable on sufficiently small hypercubes (cf.~\cite[Lemma~2.8, items~1 and 2 on p.~10]{balan2023GI}).

\begin{lemma}\label{lem:stability_of_sets:sec:D}
    Let $p \in \bbN$, $(\bfx_k)_{k = 1}^p \in \bbR^\ordgroup$, $(c_k)_{k = 1}^p \in \bbR_+$ and $\epsilon \in (0,1)$ be such that 
    \begin{gather*}
        \diffminmax{ \bfx_{\ell+1} } < \disthyperplane[\Bigg]{ \sum_{k = 1}^\ell \bfx_k }, \qquad \ell \in [p-1], \\
        \diffminmax[\Bigg]{ \sum_{k = 1}^p (c_k - 1) \bfx_k } \leq \epsilon \cdot \disthyperplane[\Bigg]{ \sum_{k = 1}^p \bfx_k }.
    \end{gather*}
    Then, it holds that 
    \begin{gather*}
        \text{1.~}~\permsort[\Bigg]{ \sum_{k = 1}^p \bfx_k } = \permsort[\Bigg]{ \sum_{k = 1}^p c_k \bfx_k }, \qquad\text{2.~}~\stabsort[\Bigg]{ \sum_{k = 1}^p \bfx_k } = \stabsort[\Bigg]{ \sum_{k = 1}^p c_k \bfx_k }, \\ 
        \text{3.~}~(1-\epsilon) \cdot \disthyperplane[\Bigg]{ \sum_{k = 1}^p \bfx_k } \leq \disthyperplane[\Bigg]{ \sum_{k = 1}^p c_k \bfx_k } \leq (1+\epsilon) \cdot \disthyperplane[\Bigg]{ \sum_{k = 1}^p \bfx_k }.
    \end{gather*}
\end{lemma}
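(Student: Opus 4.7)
Set $S_\ell := \sum_{k=1}^\ell \bfx_k$, $T := \sum_{k=1}^p c_k \bfx_k$, and $\bfy := T - S_p = \sum_{k=1}^p (c_k-1) \bfx_k$, so that the second hypothesis reads $\diffminmax{\bfy} \leq \epsilon \disthyperplane{S_p}$. My plan is to extract one structural consequence from the first hypothesis and then deduce all three items from it using only the reverse triangle inequality.

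The key structural claim I would establish is that whenever $(S_p)_i = (S_p)_j$ for some indices $i, j \in [\ordgroup]$, one has $(\bfx_k)_i = (\bfx_k)_j$ for every $k \in [p]$. I would prove it by downward induction on $\ell$, showing $(S_\ell)_i = (S_\ell)_j$ for every $\ell \in [p]$. The base case $\ell = p$ is the assumption, and the inductive step is a contradiction argument: if $(S_{\ell-1})_i \neq (S_{\ell-1})_j$ while $(S_\ell)_i = (S_\ell)_j$, then the identity $S_\ell = S_{\ell-1} + \bfx_\ell$ gives $|(\bfx_\ell)_i - (\bfx_\ell)_j| = |(S_{\ell-1})_i - (S_{\ell-1})_j| \geq \disthyperplane{S_{\ell-1}}$, contradicting the first hypothesis $\diffminmax{\bfx_\ell} < \disthyperplane{S_{\ell-1}}$. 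Hence $(S_{\ell-1})_i = (S_{\ell-1})_j$ and therefore $(\bfx_\ell)_i = (\bfx_\ell)_j$; iterating down to $\ell = 1$ finishes the claim.

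Two consequences drop out. First, the tie patterns of $S_p$ and $T$ coincide: the direction $(S_p)_i = (S_p)_j \Rightarrow (T)_i = (T)_j$ is immediate from the claim, while if $(S_p)_i \neq (S_p)_j$ then $|(T)_i - (T)_j| \geq \disthyperplane{S_p} - \diffminmax{\bfy} \geq (1-\epsilon)\disthyperplane{S_p} > 0$. Second, on every pair with $(S_p)_i \neq (S_p)_j$, the reverse triangle inequality gives $\bigl||(T)_i - (T)_j| - |(S_p)_i - (S_p)_j|\bigr| \leq \diffminmax{\bfy} \leq \epsilon \disthyperplane{S_p}$. Item~3 follows immediately: the minima defining $\disthyperplane{S_p}$ and $\disthyperplane{T}$ range over exactly the same set of pairs by the tie-pattern equivalence, and the triangle estimate transfers through the minimum to give $(1-\epsilon)\disthyperplane{S_p} \leq \disthyperplane{T} \leq (1+\epsilon)\disthyperplane{S_p}$.

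For item~1, given $\sigma \in \permsort{S_p}$ and $i < j$, either $(S_p)_{\sigma(i)} = (S_p)_{\sigma(j)}$ (whence $(T)_{\sigma(i)} = (T)_{\sigma(j)}$ by the equivalence) or $(S_p)_{\sigma(i)} > (S_p)_{\sigma(j)}$ (whence $(T)_{\sigma(i)} - (T)_{\sigma(j)} \geq (1-\epsilon)\disthyperplane{S_p} > 0$), so $\sigma \in \permsort{T}$. The reverse inclusion is argued symmetrically: for $\sigma \in \permsort{T}$ and $i < j$ with $(T)_{\sigma(i)} > (T)_{\sigma(j)}$, assuming $(S_p)_{\sigma(i)} < (S_p)_{\sigma(j)}$ would force $(T)_{\sigma(j)} - (T)_{\sigma(i)} \geq (1-\epsilon)\disthyperplane{S_p} > 0$ by the same estimate with indices swapped, contradicting the strict inequality we started from. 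Item~2 then follows from item~1 via Proposition~\ref{prop:sorting_stabiliser}. The only nontrivial step in the whole argument is the structural claim, and it is precisely there that the full telescoping strength of the first hypothesis is needed: the chain $\diffminmax{\bfx_{\ell+1}} < \disthyperplane{S_\ell}$ is exactly what forbids a new tie from appearing when passing from $S_\ell$ to $S_{\ell+1}$.
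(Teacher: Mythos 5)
Your proof is correct, but it takes a genuinely different route from the paper's. The paper builds on Lemma~\ref{lem:simple_inclusion:sec:D}: it gets $\permsort{\sum c_k\bfx_k}\subseteq\permsort{\sum\bfx_k}$ directly from that lemma, obtains the reverse inclusion via a telescoping argument (taking $\sigma\in\permsort{\sum\bfx_k}$, $\tau\in\permsort{\sum c_k\bfx_k}$, and showing $\sigma\bfx_k=\tau\bfx_k$ for every $k$ by comparing $\sort{\sum_{j\le\ell}\bfx_j}$ for successive $\ell$), and then proves the equivalence of strict inequalities by an intermediate value theorem contradiction along the homotopy $t\mapsto 1+t(c_k-1)$, using the already-established stabiliser equality. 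You instead extract a single structural fact --- ties in $\sum\bfx_k$ propagate backwards to ties in every $\bfx_k$ --- and derive all three items from it together with the reverse triangle inequality; this replaces both the telescoping step and the IVT argument. Two remarks on the comparison: your tie-propagation claim is essentially identical to the Claim the paper proves inside Lemma~\ref{lem:finally_done:sec:D}, so you have in effect discovered that one structural fact suffices for both lemmas, which is a tidy unification; on the other hand, your approach re-derives the content of Lemma~\ref{lem:simple_inclusion:sec:D} inline (the estimate $(T)_{\sigma(i)}-(T)_{\sigma(j)}\ge\delta(S_p)-\Delta(\bfy)>0$ is exactly its proof), so the total work is comparable, just organised differently. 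Both are valid; yours is arguably more self-contained and avoids the IVT, while the paper's leans more heavily on already-proven machinery.
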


\begin{proof}
    \begin{proof}[$\supseteq$]\let\qed\relax
        We note that 
        \begin{equation*}
            \sum_{k = 1}^p c_k \bfx_k = \sum_{k = 1}^p \bfx_k + \sum_{k = 1}^p (c_k-1) \bfx_k, \qquad \diffminmax[\Bigg]{ \sum_{k = 1}^p (c_k - 1) \bfx_k } < \disthyperplane[\Bigg]{ \sum_{k = 1}^p \bfx_k }.
        \end{equation*}
        Therefore, 
        \begin{equation*}
            \permsort[\Bigg]{ \sum_{k = 1}^p c_k \bfx_k } \subseteq \permsort[\Bigg]{ \sum_{k = 1}^p \bfx_k }
        \end{equation*}
        follows from Lemma~\ref{lem:simple_inclusion:sec:D}. We conclude that the stabilisers satisfy the same inclusion.
    \end{proof}

    \begin{proof}[$\subseteq$]\let\qed\relax
        Applying Lemma~\ref{lem:simple_inclusion:sec:D} inductively yields 
        \begin{equation*}
            \permsort[\Bigg]{ \sum_{k = 1}^p \bfx_k } \subseteq \permsort[\Bigg]{ \sum_{k = 1}^{p-1} \bfx_k } \subseteq \dots \subseteq \permsort{ \bfx_1 }.
        \end{equation*}
        So, if $\sigma \in \permsort{ \sum_{k = 1}^p \bfx_k }$ and $\tau \in \permsort{ \sum_{k = 1}^p c_k \bfx_k } \subseteq \permsort{ \sum_{k = 1}^p \bfx_k }$, then 
        \begin{gather*}
            \sigma \bfx_1 = \sort{ \bfx_1 } = \tau \bfx_1, \\
            \sigma (\bfx_1 + \bfx_2) = \sort{ \bfx_1 + \bfx_2 } = \tau (\bfx_1 + \bfx_2), \\
            \vdots \\
            \sigma \Bigg( \sum_{k = 1}^p \bfx_k \Bigg) = \sort[\Bigg]{ \sum_{k = 1}^p \bfx_k } = \tau \Bigg( \sum_{k = 1}^p \bfx_k \Bigg),
        \end{gather*}
        such that $\sigma \bfx_k = \tau \bfx_k$ for $k \in [p]$. Therefore, 
        \begin{equation*}
            \sort[\Bigg]{ \sum_{k = 1}^p c_k \bfx_k } = \tau\Bigg( \sum_{k = 1}^p c_k \bfx_k \Bigg) = \sum_{k = 1}^p c_k \tau \bfx_k = \sum_{k = 1}^p c_k \sigma \bfx_k = \sigma\Bigg( \sum_{k = 1}^p c_k \bfx_k \Bigg)
        \end{equation*}
        and $\sigma \in \permsort{ \sum_{k = 1}^p c_k \bfx_k }$. It follows that the stabilisers satisfy the same inclusion.
    \end{proof}

    \begin{proof}[Inequalities]\let\qed\relax
        Before proving the two inequalities, we establish the following claim.
    \end{proof}

    \begin{proof}[Claim]\let\qed\relax
        Let $i,j \in [\ordgroup]$. Then, 
        \begin{equation*}
            \Bigg( \sum_{k = 1}^p c_k \bfx_k \Bigg)_i > \Bigg( \sum_{k = 1}^p c_k \bfx_k \Bigg)_j \iff \Bigg( \sum_{k = 1}^p \bfx_k \Bigg)_i > \Bigg( \sum_{k = 1}^p \bfx_k \Bigg)_j.
        \end{equation*}
    \end{proof}

    \begin{proof}[Proof of the claim]\let\qed\relax
        First, note that $(ij) \in \stabsort{ \sum_{k = 1}^p c_k \bfx_k }$ if and only if $(ij) \in \stabsort{ \sum_{k = 1}^p \bfx_k }$ according to item~2. Now, assume by contradiction that 
        \begin{equation*}
            \Bigg( \sum_{k = 1}^p c_k \bfx_k \Bigg)_i > \Bigg( \sum_{k = 1}^p c_k \bfx_k \Bigg)_j, \qquad \Bigg( \sum_{k = 1}^p \bfx_k \Bigg)_i < \Bigg( \sum_{k = 1}^p \bfx_k \Bigg)_j
        \end{equation*}
        and consider the function $f : [0,1] \to \bbR$,
        \begin{equation*}
            f(t) := \sum_{k = 1}^p (1 + t(c_k-1)) \left( (\bfx_k)_i - (\bfx_k)_j \right),
        \end{equation*}
        which satisfies $f(0) < 0$ and $f(1) > 0$. By the intermediate value theorem, there exists $t \in (0,1)$ such that $f(t) = 0$. By item~2 and 
        \begin{equation*}
            \diffminmax[\Bigg]{ \sum_{k = 1}^p ((1 + t(c_k-1))-1) \bfx_k } = t \cdot \diffminmax[\Bigg]{ \sum_{k = 1}^p (c_k-1) \bfx_k } < t \cdot \disthyperplane[\Bigg]{ \sum_{k = 1}^p \bfx_k },
        \end{equation*}
        we may conclude that 
        \begin{equation*}
            (ij) \in \stabsort[\Bigg]{ \sum_{k = 1}^p (1 + t(c_k-1)) \bfx_k } = \stabsort[\Bigg]{ \sum_{k = 1}^p \bfx_k }
        \end{equation*}
        which contradicts the assumption $( \sum_{k = 1}^p \bfx_k )_i < ( \sum_{k = 1}^p \bfx_k )_j$.
        The reverse implication follows by noting that
        \begin{equation*}
            (ij) \in \stabsort[\Bigg]{ \sum_{k = 1}^p \bfx_k } = \stabsort[\Bigg]{ \sum_{k = 1}^p c_k \bfx_k }
        \end{equation*}
        contradicts the assumption $( \sum_{k = 1}^p c_k \bfx_k )_i > ( \sum_{k = 1}^p c_k \bfx_k )_j$ and exchanging the role of $i$ and $j$.
    \end{proof}

    Now, let $i,j \in [\ordgroup]$ be such that $( \sum_{k = 1}^p c_k \bfx_k )_i > ( \sum_{k = 1}^p c_k \bfx_k )_j$ and 
    \begin{equation*}
        \disthyperplane[\Bigg]{ \sum_{k = 1}^p c_k \bfx_k } = \Bigg( \sum_{k = 1}^p c_k \bfx_k \Bigg)_i - \Bigg( \sum_{k = 1}^p c_k \bfx_k \Bigg)_j > 0.
    \end{equation*}
    Then, we may use the claim to see that 
    \begin{align*}
        \disthyperplane[\Bigg]{ \sum_{k = 1}^p c_k \bfx_k } ={}& \Bigg( \sum_{k = 1}^p \bfx_k \Bigg)_i - \Bigg( \sum_{k = 1}^p \bfx_k \Bigg)_j \\
        & \qquad + \Bigg( \sum_{k = 1}^p (c_k-1) \bfx_k \Bigg)_i - \Bigg( \sum_{k = 1}^p (c_k-1) \bfx_k \Bigg)_j \\
        \geq{}& \disthyperplane[\Bigg]{ \sum_{k = 1}^p \bfx_k } - \diffminmax[\Bigg]{ \sum_{k = 1}^p (c_k-1) \bfx_k } \geq (1-\epsilon) \cdot \disthyperplane[\Bigg]{ \sum_{k = 1}^p \bfx_k }.
    \end{align*}
    Vice versa, let $i,j \in [\ordgroup]$ be such that $( \sum_{k = 1}^p \bfx_k )_i > ( \sum_{k = 1}^p \bfx_k )_j$ and 
    \begin{equation*}
        \disthyperplane[\Bigg]{ \sum_{k = 1}^p \bfx_k } = \Bigg( \sum_{k = 1}^p \bfx_k \Bigg)_i - \Bigg( \sum_{k = 1}^p \bfx_k \Bigg)_j > 0.
    \end{equation*}
    Using the claim once more yields
    \begin{align*}
        \disthyperplane[\Bigg]{ \sum_{k = 1}^p \bfx_k } ={}& \Bigg( \sum_{k = 1}^p c_k \bfx_k \Bigg)_i - \Bigg( \sum_{k = 1}^p c_k \bfx_k \Bigg)_j \\
        & \qquad + \Bigg( \sum_{k = 1}^p (1-c_k) \bfx_k \Bigg)_i - \Bigg( \sum_{k = 1}^p (1-c_k) \bfx_k \Bigg)_j \\
        \geq{}& \disthyperplane[\Bigg]{ \sum_{k = 1}^p c_k \bfx_k } - \diffminmax[\Bigg]{ \sum_{k = 1}^p (c_k-1) \bfx_k } \\
        \geq{}& \disthyperplane[\Bigg]{ \sum_{k = 1}^p c_k \bfx_k } - \epsilon \cdot \disthyperplane[\Bigg]{ \sum_{k = 1}^p \bfx_k }
    \end{align*}
    which proves the upper bound after rearrangement of the inequality.
\end{proof}

Finally, we show that the set of permutations that sort remains stable on sufficiently small hypercubes when we add a sufficiently small additional vector (cf.~\cite[Lemma~2.8, item~3 on p.~10]{balan2023GI}).

\begin{lemma}\label{lem:finally_done:sec:D}
    Let $p \in \bbN$, $(\bfx_k)_{k = 1}^p,\bfy \in \bbR^\ordgroup$ and $(c_k)_{k = 1}^p \in \bbR_+$ be such that 
    \begin{gather*}
        \diffminmax{\bfx_{\ell+1}} < \disthyperplane[\Bigg]{ \sum_{k = 1}^\ell \bfx_k }, \qquad \ell \in [p-1], \\
        \diffminmax[\Bigg]{ \sum_{k = 1}^p (c_k - 1) \bfx_k } \leq \frac12 \cdot \disthyperplane[\Bigg]{ \sum_{k = 1}^p \bfx_k }, \\
        \diffminmax{\bfy} \leq \frac12 \cdot \disthyperplane[\Bigg]{ \sum_{k = 1}^p \bfx_k }.
    \end{gather*}
    Then,
    \begin{gather*}
        \permsort[\Bigg]{ \sum_{k = 1}^p \bfx_k + \bfy } = \permsort[\Bigg]{ \sum_{k = 1}^p c_k \bfx_k + \bfy }, \\
        \stabsort[\Bigg]{ \sum_{k = 1}^p \bfx_k + \bfy } = \stabsort[\Bigg]{ \sum_{k = 1}^p c_k \bfx_k + \bfy }.
    \end{gather*}
\end{lemma}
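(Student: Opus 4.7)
The plan is to show that for every pair $i, j \in [\ordgroup]$, the strict and equality relations between $(\bfs + \bfy)_i$ and $(\bfs + \bfy)_j$ match those between $(\bft + \bfy)_i$ and $(\bft + \bfy)_j$, where $\bfs := \sum_{k=1}^p \bfx_k$ and $\bft := \sum_{k=1}^p c_k \bfx_k$. Since the set of sorting permutations (respectively the stabiliser) of a vector is completely determined by the total preorder it induces on $[\ordgroup]$, this will immediately yield both $\permsort{\bfs + \bfy} = \permsort{\bft + \bfy}$ and $\stabsort{\bfs + \bfy} = \stabsort{\bft + \bfy}$.

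First I would dispose of the degenerate case $\disthyperplane{\bfs} = 0$: the two bounds $\diffminmax{\sum (c_k - 1) \bfx_k} \leq \tfrac12 \disthyperplane{\bfs}$ and $\diffminmax{\bfy} \leq \tfrac12 \disthyperplane{\bfs}$ then force $\bft - \bfs$ and $\bfy$ to be constant, so $\bfs + \bfy$ and $\bft + \bfy$ are constant and both $\permsort$ and both $\stabsort$ sets equal $S_\ordgroup$. Assuming $\disthyperplane{\bfs} > 0$, I would apply Lemma~\ref{lem:stability_of_sets:sec:D} with $\epsilon = 1/2$ to obtain $\permsort{\bfs} = \permsort{\bft}$, $\stabsort{\bfs} = \stabsort{\bft}$ and $\disthyperplane{\bft} \geq \tfrac12 \disthyperplane{\bfs}$. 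Inspecting the \emph{Claim} proved inside that proof gives the pointwise transfer $\bfs_i > \bfs_j \iff \bft_i > \bft_j$; combined with the stabiliser equality this also yields $\bfs_i = \bfs_j \iff \bft_i = \bft_j$.

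Next I would case split on the pair $(i, j)$. If $\bfs_i = \bfs_j$ (equivalently $\bft_i = \bft_j$), then both $(\bfs + \bfy)_i - (\bfs + \bfy)_j$ and $(\bft + \bfy)_i - (\bft + \bfy)_j$ collapse to $\bfy_i - \bfy_j$, so their signs agree. If $\bfs_i > \bfs_j$, then $\bfs_i - \bfs_j \geq \disthyperplane{\bfs}$ and $\bft_i - \bft_j \geq \disthyperplane{\bft} \geq \tfrac12 \disthyperplane{\bfs}$, while $|\bfy_i - \bfy_j| \leq \diffminmax{\bfy} \leq \tfrac12 \disthyperplane{\bfs}$; these imply $(\bfs + \bfy)_i > (\bfs + \bfy)_j$ and $(\bft + \bfy)_i > (\bft + \bfy)_j$. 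The case $\bfs_i < \bfs_j$ is symmetric. Piecing the cases together, the two induced total preorders on $[\ordgroup]$ coincide and the lemma follows.

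The main obstacle will be securing the strict inequality $(\bft + \bfy)_i > (\bft + \bfy)_j$ in the second case above: the stated bounds on $\diffminmax{\bfy}$ and $\disthyperplane{\bft}$ combine only to a weak inequality, and the saturation $\bfs = (1, 0)$, $c_1 = 1/2$, $\bfy = (-1/2, 0)$ (which meets all hypotheses with equality) shows that $(\bft + \bfy)_i = (\bft + \bfy)_j$ really can occur and produces $\permsort{\bfs + \bfy} \subsetneq \permsort{\bft + \bfy}$. Handling this cleanly likely requires reading the hypotheses as strict, or equivalently imposing the joint bound $\diffminmax{\sum (c_k - 1)\bfx_k} + \diffminmax{\bfy} < \disthyperplane{\sum \bfx_k}$; under such a strengthening the argument above goes through without modification.
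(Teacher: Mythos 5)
You have correctly identified a genuine defect in the lemma as stated, and your counterexample is valid. Take $\ordgroup = 2$, $p = 1$, $\bfx_1 = (1,0)$, $c_1 = 1/2$, $\bfy = (-1/2,0)$. Then $\disthyperplane{\bfx_1} = 1$ while $\diffminmax{(c_1-1)\bfx_1} = \diffminmax{\bfy} = 1/2$, so both displayed $\leq$-hypotheses hold with equality; yet $\bfx_1 + \bfy = (1/2,0)$ gives $\permsort{\bfx_1 + \bfy} = \{e\}$ whereas $c_1\bfx_1 + \bfy = (0,0)$ gives $\permsort{c_1\bfx_1 + \bfy} = S_2$, and likewise for the stabilisers. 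The paper's own proof of the $\supseteq$ inclusion has the corresponding gap: it asserts $\permsort{\sum c_k\bfx_k + \bfy} \subseteq \permsort{\sum c_k\bfx_k}$, which rests on Lemma~\ref{lem:simple_inclusion:sec:D} and hence on the \emph{strict} bound $\diffminmax{\bfy} < \disthyperplane{\sum c_k\bfx_k}$, but the stated hypotheses plus Lemma~\ref{lem:stability_of_sets:sec:D}, item~3, yield only $\diffminmax{\bfy} \leq \tfrac12\disthyperplane{\sum\bfx_k} \leq \disthyperplane{\sum c_k\bfx_k}$ with equality possible, exactly as your example shows. Your proposed fix — make one of the two $\leq$-hypotheses strict, or impose the joint bound $\diffminmax{\sum(c_k-1)\bfx_k} + \diffminmax{\bfy} < \disthyperplane{\sum\bfx_k}$ — repairs the lemma. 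Reassuringly, in the single place the paper invokes it (Claim~2 of Lemma~\ref{lem:induction_step:sec:D}) the left-hand quantities tend to zero, so the strict bound is available for large $i$ and Theorem~\ref{thm:main} is unaffected; but the lemma itself should be restated.

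Your route is also a genuinely different decomposition from the paper's. You compare the total preorders on $[\ordgroup]$ induced by $\sum\bfx_k + \bfy$ and $\sum c_k\bfx_k + \bfy$ coordinate-pair by coordinate-pair, using the Claim inside the proof of Lemma~\ref{lem:stability_of_sets:sec:D} for the transfer $(\sum\bfx_k)_i \gtrless (\sum\bfx_k)_j \iff (\sum c_k\bfx_k)_i \gtrless (\sum c_k\bfx_k)_j$; matching preorders deliver both the $\permsort$ and the $\stabsort$ equalities simultaneously and symmetrically. The paper instead proves the two inclusions separately and asymmetrically: its $\subseteq$ direction is a coordinatewise case split similar in spirit to yours, while its $\supseteq$ direction descends the chain $\permsort{\sum c_k\bfx_k + \bfy} \subseteq \permsort{\sum c_k\bfx_k} = \permsort{\sum\bfx_k}$ and then uses $\sigma\bfx_k = \tau\bfx_k$ together with $\sigma\bfy = \tau\bfy$ to pull the sorting back. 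Your symmetric argument is arguably cleaner, avoids the problematic asymmetric step, and goes through without modification once the hypotheses are made strict as you indicate.
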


\begin{proof}
    Before proving the two inclusions, we show the following claim.

    \begin{proof}[Claim]\let\qed\relax
        Let $i,j \in [\ordgroup]$. Then, 
        \begin{equation*}
            \Bigg( \sum_{k = 1}^p \bfx_k \Bigg)_i = \Bigg( \sum_{k = 1}^p \bfx_k \Bigg)_j \implies \forall k \in [p]: ( \bfx_k )_i = ( \bfx_k )_j.
        \end{equation*}
    \end{proof}

    \begin{proof}[Proof of the claim]\let\qed\relax
        If $( \sum_{k = 1}^p \bfx_k )_i = ( \sum_{k = 1}^p \bfx_k )_j$, then $(ij) \in \stabsort{ \sum_{k = 1}^p \bfx_k }$. According to Lemma~\ref{lem:simple_inclusion:sec:D}, we have 
        \begin{equation*}
            (ij) \in \stabsort[\Bigg]{ \sum_{k = 1}^p \bfx_k } \subseteq \stabsort[\Bigg]{ \sum_{k = 1}^{p-1} \bfx_k } \subseteq \dots \subseteq \stabsort{ \bfx_1 }.
        \end{equation*}
        Therefore, 
        \begin{gather*}
            ( \bfx_1 )_i = ( \bfx_1 )_j, \\
            ( \bfx_1 + \bfx_2 )_i = ( \bfx_1 + \bfx_2 )_j, \\
            \vdots \\
            \Bigg( \sum_{k = 1}^p \bfx_k \Bigg)_i = \Bigg( \sum_{k = 1}^p \bfx_k \Bigg)_j,
        \end{gather*}
        which implies $( \bfx_k )_i = ( \bfx_k )_j$ for $k \in [p]$. 
    \end{proof}

    \begin{proof}[$\subseteq$]\let\qed\relax
        Let $\sigma \in \permsort{ \sum_{k = 1}^p \bfx_k + \bfy } \subseteq \permsort{ \sum_{k = 1}^p \bfx_k }$ (according to Lemma~\ref{lem:simple_inclusion:sec:D}) and let $i,j \in [\ordgroup]$ be such that $i < j$.
        
        We may now consider two cases: if $\sum_{k = 1}^p (\bfx_k)_{\sigma(i)} = \sum_{k = 1}^p (\bfx_k)_{\sigma(j)}$, then 
        \begin{multline*}
            \Bigg( \sum_{k = 1}^p c_k \bfx_k + \bfy \Bigg)_{\sigma(i)} - \Bigg( \sum_{k = 1}^p c_k \bfx_k + \bfy \Bigg)_{\sigma(j)} = y_{\sigma(i)} - y_{\sigma(j)} \\
            = \Bigg( \sum_{k = 1}^p \bfx_k + \bfy \Bigg)_{\sigma(i)} - \Bigg( \sum_{k = 1}^p \bfx_k + \bfy \Bigg)_{\sigma(j)}\geq 0
        \end{multline*}
        according to the claim.
        
        If, vice versa, $\sum_{k = 1}^p (\bfx_k)_{\sigma(i)} > \sum_{k = 1}^p (\bfx_k)_{\sigma(j)}$, then
        \begin{align*}
            \MoveEqLeft[3] \Bigg( \sum_{k = 1}^p c_k \bfx_k + \bfy \Bigg)_{\sigma(i)} - \Bigg( \sum_{k = 1}^p c_k \bfx_k + \bfy \Bigg)_{\sigma(j)} \\
            &= \Bigg( \sum_{k = 1}^p c_k \bfx_k \Bigg)_{\sigma(i)} - \Bigg( \sum_{k = 1}^p c_k \bfx_k \Bigg)_{\sigma(j)} + y_{\sigma(i)} - y_{\sigma(j)} \\
            &\geq \disthyperplane[\Bigg]{ \sum_{k = 1}^p c_k \bfx_k } - \diffminmax{\bfy} \geq \frac12 \cdot \disthyperplane[\Bigg]{ \sum_{k = 1}^p \bfx_k } - \diffminmax{\bfy} \geq 0,
        \end{align*}
        where we use the claim in the proof of Lemma~\ref{lem:stability_of_sets:sec:D} and Lemma~\ref{lem:stability_of_sets:sec:D} itself. So, in both cases $( \sum_{k = 1}^p c_k \bfx_k + \bfy )_{\sigma(i)} \geq ( \sum_{k = 1}^p c_k \bfx_k + \bfy )_{\sigma(j)}$. Since $i,j \in [\ordgroup]$ are arbitrary indices satisfying $i < j$, we can conclude that $\sigma \in \permsort{ \sum_{k = 1}^p c_k \bfx_k + \bfy }$.
    \end{proof}

    \begin{proof}[$\supseteq$]\let\qed\relax
        Let $\sigma \in \permsort{ \sum_{k = 1}^p c_k \bfx_k + \bfy } \subseteq \permsort{ \sum_{k = 1}^p c_k \bfx_k } = \permsort{ \sum_{k = 1}^p \bfx_k }$ and $\tau \in \permsort{ \sum_{k = 1}^p \bfx_k + \bfy } \subseteq \permsort{ \sum_{k = 1}^p c_k \bfx_k + \bfy } \subseteq \permsort{ \sum_{k = 1}^p \bfx_k }$. Then, we can show that $\sigma \bfx_k = \tau \bfx_k$ for $k \in [p]$, as in the proof of Lemma~\ref{lem:stability_of_sets:sec:D}. Therefore, 
        \begin{equation*}
            \sigma \Bigg( \sum_{k = 1}^p c_k \bfx_k + \bfy \Bigg) = \sort[\Bigg]{ \sum_{k = 1}^p c_k \bfx_k + \bfy } = \tau \Bigg( \sum_{k = 1}^p c_k \bfx_k + \bfy \Bigg)
        \end{equation*}
        implies $\sigma \bfy = \tau \bfy$. Finally, we have 
        \begin{align*}
            \sort[\Bigg]{ \sum_{k = 1}^p \bfx_k + \bfy } &= \tau \Bigg( \sum_{k = 1}^p \bfx_k + \bfy \Bigg) = \sum_{k = 1}^p \tau \bfx_k + \tau \bfy = \sum_{k = 1}^p \sigma \bfx_k + \sigma \bfy \\
            &= \sigma \Bigg( \sum_{k = 1}^p \bfx_k + \bfy \Bigg)
        \end{align*}
        such that $\sigma \in \permsort{ \sum_{k = 1}^p \bfx_k + \bfy }$.
    \end{proof}

    The equality for the stabilisers follows immediately.
\end{proof}

\subsection{Stabilisers of the group action}

\DeclarePairedDelimiterXPP\stabgroup[1]{H}(){}{#1}

The stabilisers $\stabgroup{v} := \set{ g \in \group }{ gv = v }$ of vectors $v \in \vsp$ under the group action satisfy an inclusion similar to the one presented in Lemma~\ref{lem:simple_inclusion:sec:D}

\begin{lemma}\label{lem:simple_inclusion_stabilisers}
    Let $v,w \in \vsp$ be such that 
    \begin{equation*}
        \normvsp{w} < \frac12 \cdot \min_{g \not\in \stabgroup{v}} \normvsp{(e-g) v}.
    \end{equation*}
    Then, $\stabgroup{v + w} \subseteq \stabgroup{v}$.
\end{lemma}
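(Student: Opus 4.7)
The plan is to argue by contraposition: I would show that any $g \in G$ satisfying $g \notin \stabgroup{v}$ must also satisfy $g \notin \stabgroup{v+w}$, under the given smallness assumption on $w$.

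Concretely, I would start from the identity $g(v+w) = v + w$, which (using the linearity of the group action) rearranges to
\begin{equation*}
    (g - e) v = (e - g) w.
\end{equation*}
Taking norms on both sides, the triangle inequality together with the unitarity of the action (so that $\normvsp{g w} = \normvsp{w}$) yields
\begin{equation*}
    \normvsp{(g-e) v} = \normvsp{(e-g) w} \leq \normvsp{w} + \normvsp{g w} = 2 \normvsp{w}.
\end{equation*}
If in addition $g \notin \stabgroup{v}$, then by definition of the minimum in the hypothesis we also have
\begin{equation*}
    \normvsp{(e-g) v} \geq \min_{h \notin \stabgroup{v}} \normvsp{(e-h) v} > 2 \normvsp{w},
\end{equation*}
which contradicts the previous display. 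Hence $g \notin \stabgroup{v+w}$.

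There is no real obstacle here; the argument is essentially the same as the one used at the start of the proof of Lemma~\ref{lem:simple_inclusion:sec:D}, with the unitarity of the group action playing the role that was previously played by $\diffminmax{\cdot}$ bounding coordinate differences. The only point worth being careful about is the factor of $2$, which arises from applying the triangle inequality to $(e-g)w$ rather than a one-sided bound, and which is exactly what forces the factor of $\tfrac12$ in the statement of the lemma.
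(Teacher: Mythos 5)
Your argument is correct and is essentially the paper's proof, just phrased as a contraposition/contradiction: the paper directly lower-bounds $\normvsp{g(v+w)-(v+w)} \geq \normvsp{(e-g)v} - 2\normvsp{w} > 0$ for $g \notin \stabgroup{v}$, whereas you assume $g(v+w)=v+w$ and use the rearranged equality $(g-e)v=(e-g)w$ to derive the same contradiction. Both rely on the identical bound $\normvsp{(e-g)w} \leq 2\normvsp{w}$ from the triangle inequality plus unitarity of the action.
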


\begin{proof}
    Let $g \not\in \stabgroup{v}$ and consider 
    \begin{align*}
        \normvsp{g(v+w) - (v+w)} &\geq \normvsp{(e-g)v} - 2 \normvsp{w} \\
        &\geq \min_{g \not\in \stabgroup{v}} \normvsp{(e-g) v} - 2 \normvsp{w} > 0
    \end{align*}
    such that $g \not\in \stabgroup{v+w}$.
\end{proof}

\section{The proof of \texorpdfstring{Theorem~\ref{thm:main}}{the main theorem}}

Throughout this section, we will work with the map $\widehat \gamma : \vsp/{\sim} \to \bbR^D$ naturally obtained from setting $\widehat \gamma([v]) = \gamma(v)$ for $v \in \vsp$. Here, $[v] = \set{gv}{g\in\group}$ denotes the equivalence class of $v$. Throughout the rest of this section, we will drop the brackets and denote the equivalence class by $v$ as well.

We will start by proving that $\widehat \embedding$ is Lipschitz continuous.

\newcommand{\collectioncoorbits}{K}

\begin{proposition}[Lipschitz continuity]\label{prop:Lipschitz:sec:D}
    Let $\collectioncoorbits : \vsp \to \bbR^{\ordgroup \nwin}$ denote the collection of coorbits 
    \begin{equation*}
        \collectioncoorbits v = \begin{pmatrix}
            \coorbit{\phi_1} v \\ \vdots \\ \coorbit{\phi_\nwin} v
        \end{pmatrix}, \qquad v \in \vsp,
    \end{equation*}
    and let $\norm{\collectioncoorbits}_\mathrm{op} := \max_{\normvsp{v}=1} \norm{Kv}_2$ denote its operator norm. Then,
    \[
        \widehat \embedding : (\vsp / {\sim},\distqsp) \to (\bbR^\embeddingdim,\norm{\cdot}_2)
    \]
    is Lipschitz continuous with Lipschitz constant bounded by $\norm{\alpha}_{\mathrm{F} \to 2} \cdot \norm{\collectioncoorbits}_\mathrm{op}$.
\end{proposition}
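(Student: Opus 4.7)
The plan is to factor the Lipschitz estimate through three elementary bounds: a linear bound for $\alpha$, a non-expansivity bound for $\operatorname{sort}$, and a linear bound for the collection-of-coorbits operator $K$. Concretely, since $\widehat\gamma$ descends from $\gamma = \alpha \circ \beta_\Phi$, I would first write
\[
    \|\widehat\gamma(v) - \widehat\gamma(w)\|_2 = \|\alpha(\beta_\Phi(v) - \beta_\Phi(w))\|_2 \leq \|\alpha\|_{\mathrm{F}\to 2} \cdot \|\beta_\Phi(v) - \beta_\Phi(w)\|_\mathrm{F},
\]
which reduces the problem to bounding $\|\beta_\Phi(v) - \beta_\Phi(w)\|_\mathrm{F}$ in terms of $\distqsp(v,w)$.

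Next, I would exploit the invariance of $\beta_\Phi$ to trade the quotient distance for an $V$-distance by picking any $g \in G$ and rewriting columnwise
\[
    \operatorname{sort}(\kappa_{\phi_i} v) - \operatorname{sort}(\kappa_{\phi_i} w) = \operatorname{sort}(\kappa_{\phi_i} v) - \operatorname{sort}(\kappa_{\phi_i}(gw)),
\]
which is legitimate because $\kappa_{\phi_i}(gw)$ is a permutation of $\kappa_{\phi_i}(w)$ (as $g'\mapsto g^{-1}g'$ is a bijection of $G$). The core analytic ingredient is then the non-expansivity of the sorting operator on $\mathbb{R}^M$ with respect to the Euclidean norm, namely $\|\operatorname{sort}(x) - \operatorname{sort}(y)\|_2 \leq \|x - y\|_2$. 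This is a classical consequence of the rearrangement inequality: if $\sigma,\tau$ sort $x,y$ respectively, then $\langle \sigma x, \tau y\rangle = \langle \operatorname{sort}(x), \operatorname{sort}(y)\rangle$ is the maximum of $\langle x, \pi y\rangle$ over all $\pi \in S_M$, so $\|\operatorname{sort}(x)-\operatorname{sort}(y)\|_2^2 = \|x\|_2^2 + \|y\|_2^2 - 2\langle \sigma x, \tau y\rangle \leq \|x\|_2^2 + \|y\|_2^2 - 2\langle x, y\rangle = \|x-y\|_2^2$.

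Applying non-expansivity columnwise and using linearity of $\kappa_{\phi_i}$, I obtain
\[
    \|\beta_\Phi(v) - \beta_\Phi(w)\|_\mathrm{F}^2 \leq \sum_{i=1}^N \|\kappa_{\phi_i}(v - gw)\|_2^2 = \|K(v - gw)\|_2^2 \leq \|K\|_\mathrm{op}^2 \cdot \|v - gw\|_V^2.
\]
Minimising over $g \in G$ yields $\|\beta_\Phi(v) - \beta_\Phi(w)\|_\mathrm{F} \leq \|K\|_\mathrm{op} \cdot \distqsp(v,w)$, and combining with the bound on $\alpha$ finishes the proof. The only non-trivial step is the non-expansivity of sorting, which is a standard rearrangement argument; the rest is bookkeeping involving operator norms and the invariance trick.
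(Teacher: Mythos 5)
Your proposal is correct and follows essentially the same route as the paper: bound $\alpha$ by its operator norm, apply the column-wise non-expansivity of $\operatorname{sort}$ to reduce to $\norm{K(v-gw)}_2$, and then minimise over $g \in G$ to recover $\distqsp(v,w)$. The only difference is cosmetic: you insert the optimal $g$ before estimating whereas the paper first establishes the bound for $\gamma$ on $V$ and then passes to the quotient, and you spell out the rearrangement-inequality argument for $\norm{\operatorname{sort}(x)-\operatorname{sort}(y)}_2 \leq \norm{x-y}_2$ which the paper uses without proof.
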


\begin{proof}
    Somewhat similar to the proofs of \cite[Theorem~3.9 on p.~15]{balan2022permutation} and \cite[Lemma~2.3 on p.~5]{balan2023GI}, we let $v,w \in \vsp$ be arbitrary and estimate 
    \begin{align*}
        \MoveEqLeft[3] \norm{ \embedding(v) - \embedding(w) }_2^2 \\
        ={}& \norm*{ \alpha \left( \begin{pmatrix}
            \sort{ \kappa_{\phi_1} v } - \sort{ \kappa_{\phi_1} w } & \dots & \sort{ \kappa_{\phi_\nwin} v } - \sort{ \kappa_{\phi_\nwin} v }
        \end{pmatrix} \right) }_2^2 \\
        \leq{}& \norm{\alpha}_{\mathrm{F} \to 2}^2 \cdot \norm*{ \begin{pmatrix}
            \sort{ \kappa_{\phi_1} v } - \sort{ \kappa_{\phi_1} w } & \dots & \sort{ \kappa_{\phi_\nwin} v } - \sort{ \kappa_{\phi_\nwin} v }
        \end{pmatrix} }_\mathrm{F}^2 \\
        ={}& \norm{\alpha}_{\mathrm{F} \to 2}^2 \cdot \sum_{\ell = 1}^\nwin \norm{ \sort{ \kappa_{\phi_\ell} v } - \sort{ \kappa_{\phi_\ell} w } }_2^2 \leq \norm{\alpha}_{\mathrm{F} \to 2}^2 \cdot \sum_{\ell = 1}^\nwin \norm{ \kappa_{\phi_\ell} (v - w) }_2^2 \\
        ={}& \norm{\alpha}_{\mathrm{F} \to 2}^2 \cdot \norm{K(v-w)}_2^2 \leq \norm{\alpha}_{\mathrm{F} \to 2}^2 \cdot \norm{K}_\mathrm{op}^2 \cdot \normvsp{ v-w }^2.
    \end{align*}
    Now, let $v,w \in \vsp$ and let $g \in \group$ be such that $\distqsp(v,w) = \normvsp{v - g w}$. Then, 
    \begin{align*}
        \norm{ \widehat \embedding(v) - \widehat \embedding(w) }_2 &= \norm{ \embedding(v) - \embedding(gw) }_2 \leq \norm{\alpha}_{\mathrm{F} \to 2} \cdot \norm{K}_\mathrm{op} \cdot \normvsp{v - gw} \\
        &= \norm{\alpha}_{\mathrm{F} \to 2} \cdot \norm{K}_\mathrm{op} \cdot \distqsp(v,w)
    \end{align*}
    as desired.
\end{proof}

\begin{remark}
    If we choose an orthonormal basis for $\vsp$ and express $\collectioncoorbits$ as a matrix with respect to that basis, then $\norm{K}_\mathrm{op}$ coincides with the largest singular value of that matrix.
\end{remark}

The inductive procedure for the rest of the proof of the main theorem (Theorem~\ref{thm:main}) is adapted from \cite{balan2023GI}. The argument for the base case is also contained in \cite{balan2022permutation}.

\begin{lemma}[Base case]\label{lem:base_case:sec:D}
    If $\widehat \embedding : (\vsp/{\sim},\distqsp) \to (\bbR^D,\norm{\cdot}_2)$ is injective and \emph{not} lower Lipschitz continuous, then there exists $z_1 \in \vsp$, $\normvsp{z_1} = 1$, at which the local lower Lipschitz constant of $\widehat \embedding$ vanishes; more precisely, there exist sequences $(v_i)_{i = 1}^\infty$, $(w_i)_{i = 1}^\infty \in \vsp$ such that $v_i \not\sim w_i$ for $i \in \bbN$, $\lim_{i \to \infty} v_i = \lim_{i \to \infty} w_i = z_1$ and 
    \begin{equation*}
        \lim_{i \to \infty} \frac{\norm{ \embedding(v_i) - \embedding(w_i) }_2}{\distqsp(v_i,w_i)} = 0.
    \end{equation*}
\end{lemma}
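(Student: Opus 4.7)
The plan is to run a compactness argument that leans on three properties of $\gamma$: positive homogeneity (since $\alpha$ is linear, the coorbit map is linear, and $\sort{\lambda \bfx}=\lambda\sort{\bfx}$ for $\lambda>0$), $G$-invariance, and continuity by Proposition~\ref{prop:Lipschitz:sec:D}. Since $\widehat\gamma$ is not lower Lipschitz continuous, there exist sequences $(u_i),(u'_i)\in V$ with $u_i\not\sim u'_i$ for all $i$ and $\norm{\gamma(u_i)-\gamma(u'_i)}_2/\distqsp(u_i,u'_i)\to 0$. Setting $\lambda_i:=\max(\normvsp{u_i},\normvsp{u'_i})$, which is strictly positive since otherwise $u_i=u'_i=0$ would contradict $u_i\not\sim u'_i$, I rescale to $v_i:=u_i/\lambda_i$ and $w_i:=u'_i/\lambda_i$. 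Positive homogeneity of $\gamma$ and of $\distqsp$ preserves both the vanishing ratio and $v_i\not\sim w_i$, while by construction $\max(\normvsp{v_i},\normvsp{w_i})=1$. Passing to a subsequence and (if necessary) swapping the roles of $v_i$ and $w_i$---which the symmetry of the conclusion allows---I may assume $\normvsp{v_i}=1$ for every $i$.

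Next, the closed unit ball of $V$ is compact (as $\dim V<\infty$), so a further diagonal extraction yields $v_i\to z_1$ with $\normvsp{z_1}=1$ and $w_i\to z_2$ with $\normvsp{z_2}\leq 1$. The quotient metric is bounded by $\distqsp(v_i,w_i)\leq\normvsp{v_i}+\normvsp{w_i}\leq 2$, so
\[
    \norm{\gamma(v_i)-\gamma(w_i)}_2 \;\leq\; 2\cdot\frac{\norm{\gamma(v_i)-\gamma(w_i)}_2}{\distqsp(v_i,w_i)} \;\longrightarrow\; 0,
\]
and continuity of $\gamma$ forces $\gamma(z_1)=\gamma(z_2)$. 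Injectivity of $\widehat\gamma$ then yields $z_1\sim z_2$, so $z_1 = h z_2$ for some $h\in \group$.

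Finally, I replace $w_i$ by $\tilde w_i := h w_i$. This leaves $\gamma(w_i)$ invariant by the $G$-invariance of $\gamma$, preserves $\distqsp(v_i,w_i)$ since the action is unitary, and preserves the orbit non-equivalence $v_i\not\sim\tilde w_i$. Moreover, continuity of the linear map $h$ gives $\tilde w_i\to h z_2=z_1$. Taking the point in the statement to be $z_1$ then produces sequences $(v_i)$ and $(\tilde w_i)$ enjoying all three properties required by the lemma.

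The main obstacle is really bookkeeping rather than any delicate estimate: each manipulation (rescaling by $\lambda_i$, the pigeonhole step reducing to $\normvsp{v_i}=1$, the successive subsequence extractions producing $z_1$ and $z_2$, and the translation by $h$) has to be carried out in an order that simultaneously preserves orbit non-equivalence, convergence of both sequences to a common unit vector, and the vanishing of the ratio. All of these reductions are compatible with positive homogeneity and $G$-invariance, so no further analytic obstruction is anticipated.
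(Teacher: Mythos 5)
Your proof is correct and follows essentially the same route as the paper's: normalize so the larger of the two sequences has unit norm, extract convergent subsequences, use continuity and the quotient bound $\distqsp \leq 2$ to force $\gamma(z_1)=\gamma(z_2)$, invoke injectivity of $\widehat\gamma$ to obtain a group element aligning the limits, and then twist one sequence by that group element. The only cosmetic difference is that the paper performs the rescaling and swap in one step via the transformation $(v,w)\mapsto r(w,v)$ and normalizes the \emph{second} sequence, whereas you normalize first and then swap labels if needed so that the \emph{first} sequence carries the unit norm; the two are equivalent.
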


\begin{proof}
    Since we assume that $\widehat \embedding$ is not lower Lipschitz continuous, there exist sequences $(v_i)_{i = 1}^\infty, (w_i)_{i = 1}^\infty \in \vsp$ such that $v_i \not\sim w_i$ for $i \in \bbN$ and 
    \begin{equation*}
        \lim_{i \to \infty} \frac{\norm{ \embedding(v_i) - \embedding(w_i) }_2}{\distqsp(v_i,w_i)} = 0.
    \end{equation*}
    The fraction is invariant under the transformation $(v,w) \mapsto r(w,v)$, $r > 0$, such that we can assume without loss of generality that $\normvsp{v_i} \leq \normvsp{w_i} = 1$. Since the unit ball in finite-dimensional vector spaces is compact, we can extract subsequences along which both $(v_i)_{i = 1}^\infty$ and $(w_i)_{i = 1}^\infty$ converge. Let us pass to these subsequences and define 
    \begin{equation*}
        f_1 := v_\infty := \lim_{i \to \infty} v_i, \qquad w_\infty := \lim_{i \to \infty} w_i.
    \end{equation*}
    We may now use the continuity of $\embedding$ (which follows because $\widehat \embedding$ and thus $\embedding$ is Lipschitz continuous) to see that 
    \begin{equation*}
        \norm{ \embedding(v_\infty) - \embedding(w_\infty) }_2 = \lim_{i \to \infty} \norm{ \embedding(v_i) - \embedding(w_i) }_2 \leq 2 \lim_{i \to \infty} \frac{\norm{ \embedding(v_i) - \embedding(w_i) }_2}{\distqsp(v_i,w_i)} = 0.
    \end{equation*}
    Therefore, $\embedding(v_\infty) = \embedding(w_\infty)$ and the injectivity of $\widehat \embedding$ implies that $f_1 = v_\infty = g w_\infty$ for some $g \in \group$. It follows immediately that $\normvsp{f_1} = \normvsp{w_\infty} = 1$. We finally define $w_i' := g w_i$ and note that $v_i \not\sim w_i'$ for $i \in \bbN$, $\lim_{i \to \infty} w_i' = f_1$ and that 
    \begin{equation*}
        \lim_{i \to \infty} \frac{\norm{ \embedding(v_i) - \embedding(w_i') }_2}{\distqsp(v_i,w_i')} = \lim_{i \to \infty} \frac{\norm{ \embedding(v_i) - \embedding(w_i) }_2}{\distqsp(v_i,w_i)} = 0.
    \end{equation*}
\end{proof}

Before proceeding with the induction step, we prove the following lemma which is similar to the first claim in the proof of \cite[Lemma~2.13 on p.~19]{balan2023GI}. It will be used in the proof of the induction step and the final theorem.

\begin{lemma}\label{lem:proofofmaintheorem}
    Let $p \in \bbN$, $p \leq d$, let $(f_j)_{j = 1}^p \in \vsp \setminus \{0\}$ be an orthogonal sequence such that
    \begin{equation}\label{eq:matryoshka_doll}
        \forall \ell \in [\nwin], k \in [p-1] : \diffminmax{ \coorbit{\phi_\ell} f_{k+1} } < \disthyperplane[\Bigg]{ \sum_{j=1}^k \coorbit{\phi_\ell} f_j }.
    \end{equation}
    Let $F := \operatorname{sp}(f_j)_{j = 1}^p \subseteq \vsp$ and assume that the local lower Lipschitz constant of
    \begin{equation*}
        \widehat \embedding|_{F} : (F/{\sim},\distqsp) \to (\bbR^\embeddingdim,\norm{\cdot}_2)
    \end{equation*}
    vanishes at $f := f_1 + f_2 + \dots + f_p$. Then, $\widehat \embedding : (\vsp/{\sim},\distqsp) \to (\bbR^\embeddingdim,\norm{\cdot}_2)$ is \emph{not} injective at $f$.
\end{lemma}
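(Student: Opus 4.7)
My plan is to mimic the base case Lemma~\ref{lem:base_case:sec:D} and extract a ``degenerate direction'' $u$ at $f$, then leverage the local piecewise linear structure of $\embedding$ (controlled by the matryoshka condition and the lemmas of Section~3) to produce a pair of inequivalent points near $f$ with identical $\embedding$-values.

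First, I unpack the hypothesis: there exist sequences $(v_i), (w_i) \in F$ with $v_i, w_i \to f$, $v_i \not\sim w_i$, and $\norm{\embedding(v_i) - \embedding(w_i)}_2/\distqsp(v_i, w_i) \to 0$. Since $\group$ is finite, I pass to a subsequence on which a single $g \in \group$ achieves $\distqsp(v_i, w_i) = \normvsp{v_i - gw_i}$; setting $\epsilon_i := \normvsp{v_i - gw_i}$ and $u_i := (v_i - gw_i)/\epsilon_i$, compactness lets me assume $u_i \to u$ with $\normvsp{u} = 1$. Combining $v_i - gw_i \to 0$ with $v_i \to f$ and $gw_i \to gf$ forces $gf = f$, so $g \in \stabgroup{f}$.

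Since $\embedding = \linear \circ \sortedcoorbits{\windows}$ is piecewise linear, a neighbourhood of $f$ decomposes into finitely many closed polyhedral pieces $\{P\}$ whose interiors meet every open set around $f$; on each such $P$, $\embedding(f + z) = \embedding(f) + A_P z$ for some linear operator $A_P$. Passing to further subsequences, I may assume $v_i \in P_j$ and $gw_i \in P_k$ for fixed pieces $j, k$ and all large $i$; the matryoshka condition and Lemma~\ref{lem:simple_inclusion:sec:D} control how the sorting permutations of the coorbits $\coorbit{\phi_\ell} v$ interact with those of $\coorbit{\phi_\ell} f$, ensuring that $A_j$ and $A_k$ are well-defined as the linear parts of the affine pieces at $f$. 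The identity
\begin{equation*}
    \frac{\embedding(v_i) - \embedding(gw_i)}{\epsilon_i} = A_j u_i + (A_j - A_k) \frac{gw_i - f}{\epsilon_i}
\end{equation*}
holds, and setting $y_i := (gw_i - f)/\epsilon_i$ and extracting a limit $y$ (after rescaling if $y_i$ is unbounded), the vanishing of the left-hand side yields $A_j(u + y) = A_k y$. For sufficiently small $t > 0$, I set $v := f + t(y + u) \in P_j$ and $w := f + ty \in P_k$, giving $\embedding(v) = \embedding(f) + t A_j(u + y) = \embedding(f) + t A_k y = \embedding(w)$. To ensure $v \not\sim w$, I note that $v = hw$ for some $h \in \group$ would force $hf = f$ (to match the affine behaviour at scale $t$) and then $u = (h - e) y$; a short computation then yields $\normvsp{v_i - hg w_i}/\epsilon_i \to 0$, contradicting the minimality of $g$ in $\distqsp(v_i, w_i)$.

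The main obstacle is to justify that the limit direction $y + u$ lies in the tangent cone $T_f P_j$ and that $y$ lies in $T_f P_k$, so that $v = f + t(y + u) \in P_j$ and $w = f + t y \in P_k$ for all sufficiently small $t > 0$, and that the $A_P$'s are correctly read off from the sorting structure. This is delicate in the degenerate regime $\disthyperplane{\coorbit{\phi_\ell} f} = 0$, where $\sort{\coorbit{\phi_\ell}(\cdot)}$ is not locally affine at $f$ and the pieces at $f$ arise from how perturbations break the ties in $\coorbit{\phi_\ell} f$; the matryoshka condition together with Lemma~\ref{lem:finally_done:sec:D} is the key tool for pinning down this tangent-cone structure, and the unbounded subcase $\normvsp{gw_i - f}/\epsilon_i \to \infty$ requires an additional rescaling to extract the appropriate limiting direction before running the construction.
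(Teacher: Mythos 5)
Your proposal is in the spirit of the paper's proof (a degenerate direction at $f$, piecewise-linearity of $\gamma$ controlled by the matryoshka condition), but you've introduced a complication the paper avoids, and that complication opens a gap you explicitly acknowledge but don't close.

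The paper never passes to a minimising group element $g$. Since the hypothesis supplies $v_i, w_i \in F$, the paper simply normalises by $\normvsp{v_i - w_i}$, defines $u_i := (v_i-w_i)/\normvsp{v_i - w_i}$, and uses $\distqsp(v_i,w_i) \le \normvsp{v_i - w_i}$ to transfer the vanishing-ratio hypothesis to the $\normvsp{\cdot}$-normalised quantity. This makes $u = \lim u_i$ automatically a unit vector in $F$, and the remainder of the argument produces the pair $(f, f+\epsilon u)$ directly: fix $\sigma_\ell \in \permsort{\kappa_{\phi_\ell} f}$, use the matryoshka condition plus Lemma~\ref{lem:stability_of_sets:sec:D} to show $\permsort{\kappa_{\phi_\ell} v_i} = \permsort{\kappa_{\phi_\ell} f} = \permsort{\kappa_{\phi_\ell} w_i}$ for large $i$, deduce $\alpha(\sigma_1\kappa_{\phi_1}u\ \cdots\ \sigma_N\kappa_{\phi_N}u) = 0$, and then take $\epsilon$ small enough that $\permsort{\kappa_{\phi_\ell}(f+\epsilon u)} \subseteq \permsort{\kappa_{\phi_\ell} f}$ (Lemma~\ref{lem:simple_inclusion:sec:D}) and that $f \not\sim f+\epsilon u$ (finiteness of $G$), giving $\gamma(f+\epsilon u) = \gamma(f)$. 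Your additional pieces ($g$, the direction $y$, the tangent cones $T_f P_j, T_f P_k$) are not needed.

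The gap: your $y_i := (gw_i - f)/\epsilon_i$, with $\epsilon_i = \distqsp(v_i,w_i)$, can genuinely be unbounded --- e.g.\ $w_i = f + i^{-1} z_1$, $v_i = f + i^{-1}z_1 + i^{-2}z_2$ gives $\normvsp{y_i} \asymp i \to \infty$. In that regime the natural rescaling $\tilde y := \lim y_i/\normvsp{y_i}$ yields only $(A_j - A_k)\tilde y = 0$ (the $A_j u_i/\normvsp{y_i}$ term vanishes), and since $\tilde y$ lies in both tangent cones, $f + t\tilde y$ is a single point whose image is computed identically by both pieces; no pair of inequivalent points with equal image emerges from this. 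So the sentence ``requires an additional rescaling to extract the appropriate limiting direction before running the construction'' flags but does not repair the hole. To fix your route you would need a second-order or mixed-scale analysis; the paper sidesteps the issue entirely by never normalising with $\distqsp$ and never introducing $y_i$ at all.

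A secondary remark: you rely on $A_j, A_k$ being ``the linear parts of the affine pieces at $f$'' without proving the key claim that the permutation sets $\permsort{\kappa_{\phi_\ell} v_i}$ and $\permsort{\kappa_{\phi_\ell} w_i}$ actually equal $\permsort{\kappa_{\phi_\ell} f}$ for $i$ large. That claim is the heart of the matter and is exactly where the matryoshka condition enters; the paper proves it via a quantitative estimate ($\diffminmax{\sum_j (c_{ij}-1)\kappa_{\phi_\ell}f_j} \le \sqrt{2d}\,\norm{\kappa_{\phi_\ell}}_\mathrm{op}\normvsp{v_i-f} \to 0$) followed by Lemma~\ref{lem:stability_of_sets:sec:D}. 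You would need to spell this out rather than cite the lemmas in passing.
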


\begin{proof}
    Let $(v_i)_{i \in \bbN},(w_i)_{i \in \bbN} \in F$ be two sequences such that $v_i \not\sim w_i$ for $i \in \bbN$, $\lim_{i \to \infty} v_i = \lim_{i \to \infty} w_i = f$ and 
    \begin{equation*}
        \lim_{i \to \infty} \frac{\norm{\embedding(v_i) - \embedding(w_i)}_2}{\distqsp(v_i,w_i)} = 0.
    \end{equation*}
    Expand the sequences into the orthogonal basis $(f_j)_{j = 1}^p$ of $F$:
    \begin{equation*}
        v_i = \sum_{j = 1}^p c_{ij} f_j, \qquad w_i = \sum_{j = 1}^p d_{ij} f_j,
    \end{equation*}
    where $c_{ij},d_{ij} \in \bbR$ for $i \in \bbN$, $j \in [p]$.

    \begin{proof}[Claim]\let\qed\relax
        For $i \in \bbN$ large enough, we have 
        \begin{equation*}
            \permsort{ \coorbit{\phi_\ell} v_i } = \permsort{ \coorbit{\phi_\ell} f } = \permsort{ \coorbit{\phi_\ell} w_i }, \qquad \ell \in [\nwin].
        \end{equation*}
    \end{proof}

    \begin{proof}[Proof of the claim]\let\qed\relax
        We only show the first equality. The second equality follows in the same way: fix $\ell \in [\nwin]$ arbitrary and note that with the notation
        \begin{equation*}
            \norm{\coorbit{\phi_\ell}}_\mathrm{op} = \max_{\substack{v \in \vsp\\ \normvsp{v} = 1}} \norm{ \coorbit{\phi_\ell} v }_2,
        \end{equation*}
        we may estimate 
        \begin{align*}
            \MoveEqLeft[3] \diffminmax[\Bigg]{ \sum_{j = 1}^p (c_{ij} - 1) \coorbit{\phi_\ell} f_j } = \norm[\Bigg]{ \bfD \sum_{j = 1}^p (c_{ij} - 1) \coorbit{\phi_\ell} f_j }_\infty \leq \sqrt{2} \cdot \norm[\Bigg]{ \sum_{j = 1}^p (c_{ij} - 1) \coorbit{\phi_\ell} f_j }_2 \\
            &\leq \sqrt{2} \cdot \sum_{j = 1}^p \abs{c_{ij} - 1} \norm{ \coorbit{\phi_\ell} f_j }_2 \leq \sqrt{2} \norm{\coorbit{\phi_\ell}}_\mathrm{op} \cdot \sum_{j = 1}^p \abs{c_{ij} - 1} \normvsp{ f_j } \\
            &\leq \sqrt{2d} \norm{\coorbit{\phi_\ell}}_\mathrm{op} \cdot \left( \sum_{j = 1}^p \abs{c_{ij} - 1}^2 \normvsp{ f_j }^2 \right)^{1/2} \\
            &= \sqrt{2d} \norm{\coorbit{\phi_\ell}}_\mathrm{op} \cdot \normvsp[\Bigg]{ \sum_{j = 1}^p (c_{ij} - 1) f_j } = \sqrt{2d} \norm{\coorbit{\phi_\ell}}_\mathrm{op} \cdot \normvsp{ v_i - f }.
        \end{align*}
        The right-hand side tends to zero as $i \to \infty$. Therefore, there exists $I_\ell \in \bbN$ such that for all $i \geq I_\ell$, 
        \begin{equation*}
            \diffminmax[\Bigg]{ \sum_{j = 1}^p (c_{ij} - 1) \coorbit{\phi_\ell} f_j } < \disthyperplane{ \coorbit{\phi_\ell} f }.
        \end{equation*}
        According to equation~\eqref{eq:matryoshka_doll} (and Lemma~\ref{lem:stability_of_sets:sec:D}), we can conclude that $\permsort{ \coorbit{\phi_\ell} v_i } = \permsort{ \coorbit{\phi_\ell} f }$. Setting $I := \max_{\ell \in [\nwin]} I_\ell$ allows us to conclude that, for all $i \geq I$ and all $\ell \in [\nwin]$, $\permsort{ \coorbit{\phi_\ell} v_i } = \permsort{ \coorbit{\phi_\ell} f }$ as desired.
    \end{proof}

    Now, let us pick $(\sigma_\ell)_{\ell = 1}^\nwin \in S_\ordgroup$ be such that $\sigma_\ell \in \permsort{ \coorbit{\phi_\ell} f }$ for $\ell \in [\nwin]$ and note that the claim implies that 
    \begin{align*}
        \MoveEqLeft[3] \norm{ \embedding(v_i) - \embedding(w_i) }_2 \\
        ={}& \norm*{ \alpha \left( \begin{pmatrix}
            \sort{ \kappa_{\phi_1} v_i } - \sort{ \kappa_{\phi_1} w_i } & \dots & \sort{ \kappa_{\phi_\nwin} v_i } - \sort{ \kappa_{\phi_\nwin} w_i }
        \end{pmatrix} \right) }_2 \\
        ={}& \norm*{ \alpha \begin{pmatrix}
            \sigma_1 \coorbit{\phi_1} (v_i-w_i) & \dots & \sigma_\nwin \coorbit{\phi_\nwin} (v_i-w_i)
        \end{pmatrix} }_2,
    \end{align*}
    for $i \in \bbN$ large enough. So, let us define $u_i := (v_i-w_i)/\normvsp{v_i-w_i}$. Since the closed unit ball in $F$ is compact, we can find a subsequence along which $u_i$ converges. Let us pass to this subsequence and denote its limit by $u$. Then, we have $\normvsp{u} = 1$ as well as
    \begin{align*}
        \MoveEqLeft[3] \norm*{ \alpha \begin{pmatrix}
            \sigma_1 \coorbit{\phi_1} u & \dots & \sigma_\nwin \coorbit{\phi_\nwin} u
        \end{pmatrix} }_2 = \lim_{i \to \infty} \norm*{ \alpha \begin{pmatrix}
            \sigma_1 \coorbit{\phi_1} u_i & \dots & \sigma_\nwin \coorbit{\phi_\nwin} u_i
        \end{pmatrix} }_2 \\
        &= \lim_{i \to \infty} \frac{\norm*{ \alpha \begin{pmatrix}
            \sigma_1 \coorbit{\phi_1} (v_i-w_i) & \dots & \sigma_\nwin \coorbit{\phi_\nwin} (v_i-w_i)
        \end{pmatrix} }_2}{\normvsp{v_i-w_i}} \\
        &\leq \lim_{i \to \infty} \frac{\norm{\embedding(v_i) - \embedding(w_i)}_2}{\distqsp(v_i,w_i)} = 0
    \end{align*}
    and thus $\alpha \begin{pmatrix} \sigma_1 \coorbit{\phi_1} u & \dots & \sigma_\nwin \coorbit{\phi_\nwin} u \end{pmatrix} = \bfnull_{\embeddingdim}$.

    We finally note that there exist arbitrarily small $\epsilon > 0$ such that $f \not\sim f + \epsilon u$: indeed, assume the opposite and we can find a sequence $(\epsilon_n)_{n \in \bbN} \in \bbR_+$ converging to zero and a group element $g \in \group$ such that $f = g f + \epsilon_n g u$ for all $n \in \bbN$ because $\group$ is finite. Therefore, $g$ stabilises $f$ which implies $\epsilon_n g u = 0$ for all $n \in \bbN$: a contradiction to $\normvsp{u} = 1$. So, let us pick such an $\epsilon$ which also satisfies \begin{equation*}
        \epsilon < \min_{\ell \in [\nwin]} \frac{\disthyperplane{\coorbit{\phi_\ell} f}}{\diffminmax{\coorbit{\phi_\ell} u}}.
    \end{equation*}
    Then, $\permsort{ \coorbit{\phi_\ell} (f + \epsilon u) } \subseteq \permsort{ \coorbit{\phi_\ell} f }$ for all $\ell \in [\nwin]$ according to Lemma~\ref{lem:simple_inclusion:sec:D} and thus picking $(\sigma_\ell)_{\ell = 1}^\nwin \in S_\ordgroup$ such that $\sigma_\ell \in \permsort{ \coorbit{\phi_\ell} (f + \epsilon u) }$ for all $\ell \in [\nwin]$ guarantees that 
    \begin{align*}
        \embedding(f + \epsilon u) &= \alpha \begin{pmatrix}
            \sort{\coorbit{\phi_1} (f + \epsilon u)} & \dots & \sort{\coorbit{\phi_\nwin} (f + \epsilon u)}
        \end{pmatrix} \\
        &= \alpha \begin{pmatrix}
            \sigma_1 \coorbit{\phi_1} (f + \epsilon u) & \dots & \sigma_\nwin \coorbit{\phi_\nwin} (f + \epsilon u)
        \end{pmatrix} \\
        &= \alpha \begin{pmatrix}
            \sigma_1 \coorbit{\phi_1} f & \dots & \sigma_\nwin \coorbit{\phi_\nwin} f
        \end{pmatrix} + \epsilon \cdot \alpha \begin{pmatrix}
            \sigma_1 \coorbit{\phi_1} u & \dots & \sigma_\nwin \coorbit{\phi_\nwin} u
        \end{pmatrix} \\
        &= \alpha \begin{pmatrix}
            \sigma_1 \coorbit{\phi_1} f & \dots & \sigma_\nwin \coorbit{\phi_\nwin} f
        \end{pmatrix} = \alpha \begin{pmatrix}
            \sort{\coorbit{\phi_1} f} & \dots & \sort{\coorbit{\phi_\nwin} f}
        \end{pmatrix} \\
        &= \embedding(f);
    \end{align*}
    i.e.~$\widehat \embedding$ is \emph{not} injective at $f$.
\end{proof}

We are now ready to prove the induction step (cf.~\cite[Lemma~2.13 on p.~19]{balan2023GI})

\begin{lemma}[Induction step]\label{lem:induction_step:sec:D}
    Let $p \in \bbN$, $p < d$, and let $(f_j)_{j = 1}^p \in \vsp \setminus \{0\}$ be an orthogonal sequence such that the local lower Lipschitz constant of $\widehat \embedding : (\vsp/{\sim},\distqsp) \to (\bbR^\embeddingdim,\norm{\cdot}_2)$ vanishes at $f := f_1 + f_2 + \dots + f_p$, $\lVert f_1 \rVert = 1$ and 
    \begin{gather}
        \diffminmax{ \coorbit{\phi_\ell} f_{k+1} } < \disthyperplane[\Bigg]{ \coorbit{\phi_\ell} \sum_{j=1}^k f_j }, \label{eq:ass_numerator} \\
        \normvsp{f_{k+1}} < \frac12 \cdot \min_{g \not\in \stabgroup{\sum_{j=1}^k f_j}} \normvsp[\Bigg]{ (g-e) \sum_{j=1}^k f_j }, \label{eq:ass_denominator}
    \end{gather}
    for all $k \in [p-1]$, $\ell \in [\nwin]$.
    
    If $\widehat \embedding$ is injective, then there exists $f_{p+1} \in (\operatorname{sp}(f_j)_{j = 1}^p)^\perp \setminus \{0\}$ such that the local lower Lipschitz constant of $\widehat \embedding$ vanishes at $f + f_{p+1}$ and 
    \begin{gather*}
        \diffminmax{ \coorbit{\phi_\ell} f_{p+1} } < \disthyperplane[\Bigg]{ \coorbit{\phi_\ell} \sum_{j=1}^p f_j }, \\
        \normvsp{f_{p+1}} < \frac12 \cdot \min_{g \not\in \stabgroup{\sum_{j=1}^p f_j}} \normvsp[\Bigg]{ (g-e) \sum_{j=1}^p f_j },
    \end{gather*}
    for all $\ell \in [\nwin]$.
\end{lemma}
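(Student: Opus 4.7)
The plan is to mimic the base case, Lemma~\ref{lem:base_case:sec:D}, while additionally extracting a direction orthogonal to $F := \operatorname{sp}(f_j)_{j=1}^p$. From the hypothesis that the local lower Lipschitz constant of $\widehat \embedding$ at $f$ vanishes, together with the group-absorption and compactness procedure of the base case, I would obtain sequences $(v_i)_{i \in \bbN}, (w_i)_{i \in \bbN} \in \vsp$ with $v_i \not\sim w_i$, both tending to $f$ in $\vsp$, and $\norm{\embedding(v_i) - \embedding(w_i)}_2 / \distqsp(v_i, w_i) \to 0$. Normalising $u_i := (v_i - w_i)/\normvsp{v_i - w_i}$ and passing to a convergent subsequence $u_i \to u$ with $\normvsp{u} = 1$, I would decompose $u = u^F + u^\perp$ orthogonally with respect to $F$ and $F^\perp$.

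The crux, and the main obstacle, is to show $u^\perp \neq 0$; the approach is to argue by contradiction, assuming $u \in F$. Since $v_i, w_i \to f$, the inductive hypotheses \eqref{eq:matryoshka_doll} and \eqref{eq:ass_denominator}, combined with Lemmas~\ref{lem:simple_inclusion:sec:D} and \ref{lem:finally_done:sec:D}, guarantee $\permsort{\coorbit{\phi_\ell} v_i}, \permsort{\coorbit{\phi_\ell} w_i} \subseteq \permsort{\coorbit{\phi_\ell} f}$ for large $i$ and every $\ell \in [\nwin]$. Since each ambient set is finite, I would pass to a further subsequence so that representatives $\sigma_\ell \in \permsort{\coorbit{\phi_\ell} v_i}$ and $\tau_\ell \in \permsort{\coorbit{\phi_\ell} w_i}$ are independent of $i$. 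Following the strategy from the proof of Lemma~\ref{lem:proofofmaintheorem}---expanding the difference $\embedding(v_i) - \embedding(w_i)$ using these permutations, exploiting the identity $\sigma_\ell \coorbit{\phi_\ell} f = \tau_\ell \coorbit{\phi_\ell} f = \sort{\coorbit{\phi_\ell} f}$, dividing by $\normvsp{v_i - w_i}$, and passing to the limit---one arrives at the annihilation identity $\linear \begin{pmatrix} \sigma_1 \coorbit{\phi_1} u & \cdots & \sigma_\nwin \coorbit{\phi_\nwin} u \end{pmatrix} = \bfnull_\embeddingdim$. Then, as in the end of that same proof, choosing an arbitrarily small $\epsilon > 0$ with $f + \epsilon u \not\sim f$ (available because $\group$ is finite and $u \neq 0$) and using Lemma~\ref{lem:simple_inclusion:sec:D} to match sort permutations between $f$ and $f + \epsilon u$ would yield $\embedding(f + \epsilon u) = \embedding(f)$, contradicting the injectivity of $\widehat \embedding$.

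Having established $u^\perp \neq 0$, I would set $f_{p+1} := \eta u^\perp$ for $\eta > 0$ small enough that both $\diffminmax{\coorbit{\phi_\ell} f_{p+1}} < \disthyperplane{\sum_{j=1}^p \coorbit{\phi_\ell} f_j}$ (for each $\ell \in [\nwin]$) and $\normvsp{f_{p+1}} < \tfrac{1}{2} \min_{g \notin \stabgroup{\sum_{j=1}^p f_j}} \normvsp{(g-e)\sum_{j=1}^p f_j}$ hold; such $\eta$ exists because both right-hand sides are strictly positive (the latter by finiteness of $\group$). Then $f_{p+1} \in F^\perp \setminus \{0\}$ satisfies both required inequalities. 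To verify that the local lower Lipschitz constant of $\widehat \embedding$ vanishes at $f + f_{p+1}$, I would consider the shifted witness sequences $\widetilde v_i := v_i + f_{p+1}$ and $\widetilde w_i := w_i + f_{p+1}$: these tend to $f + f_{p+1}$, satisfy $\widetilde v_i - \widetilde w_i = v_i - w_i$, and---by Lemma~\ref{lem:simple_inclusion_stabilisers}---remain inequivalent for $i$ large. Applying Lemma~\ref{lem:finally_done:sec:D} with $(f_1, \ldots, f_p, f_{p+1})$ and the small perturbation $v_i - f$ aligns the sort structures so that $\embedding(\widetilde v_i) - \embedding(\widetilde w_i) = \embedding(v_i) - \embedding(w_i)$ for large $i$; consequently the ratio $\norm{\embedding(\widetilde v_i) - \embedding(\widetilde w_i)}_2 / \distqsp(\widetilde v_i, \widetilde w_i)$ tends to zero as well.
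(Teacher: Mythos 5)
Your proposal diverges substantially from the paper's argument, and at two places the key steps fail.

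\textbf{The annihilation identity does not follow.} You normalise the \emph{difference}: $u_i := (v_i - w_i)/\normvsp{v_i - w_i} \to u$. Since $v_i, w_i \to f$ in $V$ but \emph{not} in $F$, the only stability you can extract from Lemma~\ref{lem:simple_inclusion:sec:D} is the \emph{inclusion} $\permsort{\coorbit{\phi_\ell} v_i}, \permsort{\coorbit{\phi_\ell} w_i} \subseteq \permsort{\coorbit{\phi_\ell} f}$, not the equality that drives the proof of Lemma~\ref{lem:proofofmaintheorem} (the equality there uses Lemma~\ref{lem:stability_of_sets:sec:D}, which crucially requires the perturbation to be a rescaling of the $f_j$'s, i.e.\ $v_i, w_i \in F$). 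So your $\sigma_\ell$ and $\tau_\ell$ need not agree, and after cancelling $\sort{\coorbit{\phi_\ell} f}$ you are left with $\sigma_\ell \coorbit{\phi_\ell}(v_i - f) - \tau_\ell \coorbit{\phi_\ell}(w_i - f)$. Divided by $\normvsp{v_i - w_i}$ these two terms are controlled only through their \emph{difference}; the individual ratios $(v_i - f)/\normvsp{v_i - w_i}$ and $(w_i - f)/\normvsp{v_i - w_i}$ can blow up (take $v_i - f$ and $w_i - f$ nearly parallel and much larger than $v_i - w_i$). The limit therefore does not exist, and the claimed identity $\linear(\sigma_1 \coorbit{\phi_1} u, \dots, \sigma_N \coorbit{\phi_N} u) = 0$ is not obtained. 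This is exactly the issue the paper resolves with the blow-up factor $t_i$ and the surrogate sequences $v_i' = f + t_i \vorth_i$, $w_i' = f + t_i u_i$, which pin the $F$-component at $f$ and magnify the orthogonal part to macroscopic scale.

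\textbf{The final verification also fails.} Having set $f_{p+1} := \eta u^\perp$ with $\eta$ chosen small, you attempt to transport the witness sequences by $\widetilde v_i := v_i + f_{p+1}$, $\widetilde w_i := w_i + f_{p+1}$, asserting that $\embedding(\widetilde v_i) - \embedding(\widetilde w_i) = \embedding(v_i) - \embedding(w_i)$. This does not follow from Lemma~\ref{lem:finally_done:sec:D}. The shift $f_{p+1}$ is a \emph{fixed} vector of positive size while $\disthyperplane{\coorbit{\phi_\ell} v_i}$ can degenerate to zero as $v_i \to f$ (coordinates of $\coorbit{\phi_\ell} f$ that coincide are nearly tied in $\coorbit{\phi_\ell} v_i$), so there is no uniform-in-$i$ control of the sort structure under this perturbation. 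Moreover, even if the sort permutations of $v_i$ and $\widetilde v_i$ (and of $w_i$ and $\widetilde w_i$) did coincide, you would still pick up the extra term $\alpha\big((\sigma_\ell - \tau_\ell)\coorbit{\phi_\ell} f_{p+1}\big)_\ell$, which need not vanish. Finally, the inequivalence $\widetilde v_i \not\sim \widetilde w_i$ is claimed to follow from Lemma~\ref{lem:simple_inclusion_stabilisers}, but that lemma is about stabilisers, not pairwise orbit-inequivalence, and is not applicable here.

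In the paper's proof, $f_{p+1}$ is \emph{not} a freely chosen small multiple of $u^\perp$: it is the limit $f_{p+1} = \lim_{i\to\infty} t_i \vorth_i$, whose magnitude is forced by the specific constant in the definition of $t_i$ (involving $\disthyperplane{\coorbit{\phi_\ell} f}$, the operator norms of $\coorbit{\phi_\ell}$, and $\min_{g \notin \stabgroup{f}} \normvsp{(g-e)f}$). The lower-Lipschitz ratio then transfers to the surrogate sequences by (i) a stabiliser argument on the denominator showing $\distqsp(v_i', w_i') \geq t_i \distqsp(v_i, w_i)$ and (ii) a sort-structure transfer on the numerator obtained via Lemmas~\ref{lem:finally_done:sec:D} and \ref{lem:equality_along_interval:sec:D}, exploiting that $v_i'$ differs from $f + \vorth_i$ only by a \emph{rescaling} of $\vorth_i$. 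Your sketch contains none of this machinery and, as written, does not close.
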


\newcommand{\vorth}{\mathbb{v}}
\newcommand{\worth}{\mathbb{w}}
\newcommand{\uorth}{u}

\begin{proof}
    Let $(v_i)_{i = 1}^\infty, (w_i)_{i = 1}^\infty \in \vsp$ be such that $v_i \not\sim w_i$ for $i \in \bbN$, $\lim_{i \to \infty} v_i = \lim_{i \to \infty} w_i = f$ and 
    \begin{equation*}
        \lim_{i \to \infty} \frac{\norm{ \embedding(v_i) - \embedding(w_i) }_2}{\distqsp(v_i,w_i)} = 0.
    \end{equation*}
    Let $(\vorth_i)_{i = 1}^\infty,(\worth_i)_{i = 1}^\infty \in (\operatorname{sp}(f_j)_{j = 1}^p)^\perp$ denote the orthogonal projections of $(v_i)_{i = 1}^\infty$, $(w_i)_{i = 1}^\infty$ onto the orthogonal complement of $\operatorname{sp}(f_j)_{j = 1}^p$. We may note that $\vorth_i \neq 0$ or $\worth_i \neq 0$ for $i \in \bbN$ large enough: indeed, assume by contradiction that there exists a subsequence along which $\vorth_i = \worth_i = 0$. Then, the local lower Lipschitz constant of $\widehat \embedding|_{F} : (F/{\sim},\distqsp) \to (\bbR^\embeddingdim,\norm{\cdot}_2)$ vanishes at $f$, where $F = \operatorname{sp}(f_j)_{j = 1}^p \subseteq \vsp$. Lemma~\ref{lem:proofofmaintheorem} now implies that $\widehat \gamma$ is \emph{not} injective: a contradiction.
    
    In the following, we assume without loss of generality that $\normvsp{\vorth_i} \leq \normvsp{\worth_i} \neq 0$ for $i \in \bbN$ by switching the roles of $v_i$ and $w_i$ if necessary. Now, write 
    \begin{equation*}
        v_i = \sum_{j = 1}^p c_{ij} f_j + \vorth_i, \qquad w_i = \sum_{j = 1}^p d_{ij} f_j + \worth_i,
    \end{equation*}
    where $c_{ij}, d_{ij} \in \bbR$ for $i \in \bbN$, $j \in [p]$. Next, we define 
    \begin{equation*}
        u_i := w_i - v_i + \vorth_i = \sum_{j = 1}^p (d_{ij} - c_{ij}) f_j + \worth_i, \qquad i \in \bbN,
    \end{equation*}
    and note that $\normvsp{u_i} \geq \normvsp{\worth_i} > 0$ by the orthogonality of $(f_j)_{j = 1}^p$ and $\worth_i$. Finally, set 
    \begin{equation*}
        t_i := \frac{\epsilon}{\sqrt{2} \normvsp{u_i}} \cdot \min\left\{ \min_{\ell \in [\nwin]} \norm{\coorbit{\phi_\ell}}_\mathrm{op}^{-1} \disthyperplane{\coorbit{\phi_\ell} f} , \frac{1}{2 \sqrt{2}} \cdot \min_{g \not\in \stabgroup{f}} \normvsp{(g-e)f} \right\}
    \end{equation*}
    for $i \in \bbN$, where $\epsilon < 1$ and $\norm{\coorbit{\phi_\ell}}_\mathrm{op} = \max_{\normvsp{v}=1} \norm{\coorbit{\phi_\ell} v}_2$ denotes the operator norm of the coorbit $\coorbit{\phi_\ell} : \vsp \to \bbR^\ordgroup$ for $\ell \in [\nwin]$.

    We note that 
    \begin{gather*}
        \lim_{i \to \infty} \normvsp{\vorth_i}^2 \leq \Bigg( \lim_{i \to \infty} \sum_{j = 1}^p \abs{c_{ij}-1}^2 \normvsp{f_j}^2 + \normvsp{\vorth_i}^2 \Bigg) = \lim_{i \to \infty} \normvsp{v_i - f}^2 = 0, \\
        \lim_{i \to \infty} \normvsp{u_i} \leq \lim_{i \to \infty} \normvsp{v_i - w_i} + \lim_{i \to \infty} \normvsp{\vorth_i} = 0.
    \end{gather*}
    It follows that $t_i \to \infty$ as $i \to \infty$. So, let us assume that $t_i > 1$ for the rest of this proof. Finally, note that $\normvsp{ t_i u_i }$ is constant in $i \in \bbN$. It follows that $t_i u_i$ converges along a subsequence. Similarly, $\normvsp{ t_i \vorth_i } = t_i \normvsp{ \vorth_i } \leq t_i \normvsp{ u_i } = \normvsp{ t_i u_i }$ is upper bounded by a constant in $i \in \bbN$ such that $t_i \vorth_i$ converges along a subsequence as well. Passing to these subsequences, we may write $t_i u_i \to u \neq 0$ and $t_i \vorth_i \to f_{p+1}$. Note that $\normvsp{ f_{p+1} } = \lim_{i \to \infty} t_i \normvsp{ \vorth_i } \leq \lim_{i \to \infty} t_i \normvsp{ u_i } = \normvsp{ u }$.

    \begin{proof}[Claim~1]\let\qed\relax
        The sequences 
        \begin{equation*}
            v_i' = f + t_i \vorth_i, \qquad w_i' = f + t_i u_i, \qquad i \in \bbN,
        \end{equation*}
        achieve lower Lipschitz constant zero.
    \end{proof}

    \begin{proof}[Proof of Claim~1]\let\qed\relax
        The claim is proven in two steps. In the first step, we bound the denominator $\distqsp(v_i', w_i')$: consider $(g_i)_{i = 1}^\infty \in \group$ such that $\distqsp(v_i', w_i') = \normvsp{ g_i v_i' - w_i' }$. Since $\group$ is finite, there exists a group element $g \in \group$ which occurs infinitely often. Let us pass to a subsequence along which this is the case. We claim that $g \in \stabgroup{f}$: indeed, let $h \not\in \stabgroup{f}$ be arbitrary. Then, we have 
        \begin{align*}
            \normvsp{h v_i' - w_i'} &= \normvsp{(h-e)f + t_i(h \vorth_i - u_i)} \\
            &\geq \normvsp{(h-e)f} - t_i \normvsp{h \vorth_i - u_i} \\
            &\geq \min_{h \not\in \stabgroup{f}} \normvsp{(h-e)f} - t_i ( \normvsp{\vorth_i} + \normvsp{u_i} ) \\
            &\geq \min_{h \not\in \stabgroup{f}} \normvsp{(h-e)f} - 2 t_i \normvsp{u_i} \\
            &> 2 t_i \normvsp{u_i} \geq t_i \normvsp{\vorth_i - u_i} = \normvsp{v_i' - w_i'} \geq \operatorname{dist}(v_i',w_i'),
        \end{align*}
        by the definition of $t_i$. So, $g \in \stabgroup{f}$ which implies that
        \begin{equation*}
            \distqsp(v_i', w_i') = t_i \normvsp{g \vorth_i - u_i} = t_i \normvsp[\Bigg]{g \vorth_i - \sum_{j = 1}^p (d_{ij} - c_{ij}) f_j - \worth_i}.
        \end{equation*}
        Now, we would like to use that $\stabgroup{f} \subseteq \stabgroup{ \sum_{j = 1}^{p-1} f_j } \subseteq \dots \subseteq \stabsort{ f_1 }$ which follows from Lemma~\ref{lem:simple_inclusion_stabilisers} by inequality~\eqref{eq:ass_denominator}. Therefore, $g \in \stabgroup{f_j}$ for all $j \in [p]$ and we have 
        \begin{align*}
            \distqsp(v_i', w_i') &= t_i \normvsp[\Bigg]{g \vorth_i - \sum_{j = 1}^p(d_{ij} - c_{ij}) f_j - \worth_i} \\
            &= t_i \normvsp[\Bigg]{g \Bigg( \sum_{j = 1}^p c_{ij} f_j + \vorth_i \Bigg) - \Bigg( \sum_{j = 1}^p d_{ij} f_j + \worth_i \Bigg)} \\
            &= t_i \normvsp{g v_i - w_i} \geq t_i \distqsp(v_i, w_i).
        \end{align*}
        
        In the second step, we consider the numerator $\norm{\embedding(v_i') - \embedding(w_i')}_2$. For all $i \in \bbN$ and $\ell \in [\nwin]$, pick arbitrary $\sigma_{i\ell} \in \permsort{ \coorbit{\phi_\ell} v_i }$ and $\tau_{i\ell} \in \permsort{ \coorbit{\phi_\ell} w_i }$. Then, we have
        \begin{equation}\label{eq:numerator:lem:generalisedinduction}
            \begin{aligned}
                \MoveEqLeft[3] \norm{\embedding(v_i) - \embedding(w_i)}_2 \\
                &= \norm*{ \alpha \begin{pmatrix}
                    \sort{ \coorbit{\phi_1} v_i } - \sort{ \coorbit{\phi_1} w_i } & \dots & \sort{ \coorbit{\phi_\nwin} v_i } - \sort{ \coorbit{\phi_\nwin} w_i }
                \end{pmatrix} }_2 \\
                &= \norm*{ \alpha \begin{pmatrix}
                    \sigma_{i1} \coorbit{\phi_1} v_i - \tau_{i1} \coorbit{\phi_1} w_i & \dots & \sigma_{i\nwin} \coorbit{\phi_\nwin} v_i - \tau_{i\nwin} \coorbit{\phi_1\nwin} w_i
                \end{pmatrix} }_2 \\
                &= \left\lVert \alpha \left( \begin{matrix}
                    \sigma_{i1} \coorbit{\phi_1} \left( \sum_{j = 1}^p c_{ij} f_j + \vorth_i \right) - \tau_{i1} \coorbit{\phi_1} \left( \sum_{j = 1}^p d_{ij} f_j + \worth_i \right) \end{matrix} \right. \right. \\
                & \qquad \left. \left. \begin{matrix}
                    \dots & \sigma_{i\nwin} \coorbit{\phi_\nwin} \left( \sum_{j = 1}^p c_{ij} f_j + \vorth_i \right) - \tau_{i\nwin} \coorbit{\phi_\nwin} \left( \sum_{j = 1}^p d_{ij} f_j + \worth_i \right) \end{matrix} \right) \right\rVert_2.
            \end{aligned}
        \end{equation}
        Next, we want to use that $\sigma_{i\ell},\tau_{i,\phi} \in \permsort{ \coorbit{\phi_\ell} \sum_{j = 1}^p c_{ij} f_j }$ for all $\ell \in [\nwin]$ and all $i \in \bbN$ large enough.
    \end{proof}

    \begin{proof}[Claim~2]\let\qed\relax
        There exists $I \in \bbN$ such that, for all $i \geq I$ and all $\ell \in [\nwin]$, we have 
        \begin{equation}\label{eq:Xinclusion:lem:generalisedinduction}
            \permsort{ \coorbit{\phi_\ell} v_i } = \permsort{ \coorbit{\phi_\ell} (f + \vorth_i) } \subseteq \permsort{ \coorbit{\phi_\ell} f } = \permsort[\Bigg]{ \coorbit{\phi_\ell} \sum_{j = 1}^p c_{ij} f_j }.
        \end{equation}
    \end{proof}

    \begin{proof}[Proof of Claim~2]\let\qed\relax
        Let us fix $\ell \in [\nwin]$ arbitrary. The first equality follows from Lemma~\ref{lem:finally_done:sec:D} and inequality~\eqref{eq:ass_numerator} once 
        \begin{equation}\label{eq:inequalities_proof_claim21}
            \max\Bigg\{ \diffminmax[\Bigg]{ \coorbit{\phi_\ell} \sum_{j = 1}^p (c_{ij} - 1) f_j }, \diffminmax{ \coorbit{\phi_\ell} \vorth_i } \Bigg\} \leq \frac12 \cdot \disthyperplane{ \coorbit{\phi_\ell} f }.
        \end{equation}
        As part of the proof of Lemma~\ref{lem:proofofmaintheorem}, we had shown that 
        \begin{equation*}
            \lim_{i \to \infty} \diffminmax[\Bigg]{ \coorbit{\phi_\ell} \sum_{j = 1}^p (c_{ij} - 1) f_j } = 0.
        \end{equation*}
        Additionally, we can show that 
        \begin{equation*}
            \lim_{i \to \infty} \diffminmax{ \coorbit{\phi_\ell} \vorth_i } \leq \sqrt{2} \cdot \lim_{i \to \infty} \norm{ \coorbit{\phi_\ell} \vorth_i }_2 \leq \sqrt{2} \norm{\coorbit{\phi_\ell}}_\mathrm{op} \cdot \lim_{i \to \infty} \normvsp{\vorth_i} = 0.
        \end{equation*}
        It follows that there exists an integer $I_\ell \in \bbN$ such that both inequalities in equation~\eqref{eq:inequalities_proof_claim21} are satisfed with $i \geq I_\ell$.
        
        The second inclusion in equation~\ref{eq:Xinclusion:lem:generalisedinduction} follows from Lemma~\ref{lem:simple_inclusion:sec:D} due to inequality~\eqref{eq:inequalities_proof_claim21}. The final equality follows from Lemma~\ref{lem:stability_of_sets:sec:D} due to inequality~\eqref{eq:inequalities_proof_claim21} and inequality~\eqref{eq:ass_numerator}. Setting $I := \max_{\ell \in [\nwin]} I_\ell$ finishes the proof of Claim~2.
    \end{proof}

    \begin{proof}[Claim~3]\let\qed\relax
        There exists $I \in \bbN$ such that, for all $i \geq I$ and all $\ell \in [\nwin]$, we have 
        \begin{equation}\label{eq:Yinclusion:lem:generalisedinduction}
            \permsort{ \coorbit{\phi_\ell} w_i } = \permsort{ \coorbit{\phi_\ell} (f + u_i) } \subseteq \permsort{ \coorbit{\phi_\ell} f }.
        \end{equation}
    \end{proof}

    \begin{proof}[Proof of Claim~3]\let\qed\relax
        The proof is almost identical to that of Claim~2 once we realise that 
        \begin{equation*}
            w_i = \sum_{j = 1}^p c_{ij} f_j + u_i, \qquad i \in \bbN.
        \end{equation*}
        We will therefore omit it. 
    \end{proof}

    \begin{proof}[Proof of Claim~1 (continued)]\let\qed\relax
        Passing to the sequences starting at $I$, we reconsider equation~\eqref{eq:numerator:lem:generalisedinduction}: we have
        \begin{equation*}
            \sigma_{i\ell} \coorbit{\phi_\ell} \sum_{j = 1}^p c_{ij} f_j = \sort[\Bigg]{ \coorbit{\phi_\ell} \sum_{j = 1}^p c_{ij} f_j } = \tau_{i\ell} \coorbit{\phi_\ell} \sum_{j = 1}^p c_{ij} f_j
        \end{equation*}
        and thus
        \begin{align*}
            \MoveEqLeft[3] \norm{\embedding(v_i) - \embedding(w_i)}_2 \\
                &= \left\lVert \alpha \left( \begin{matrix}
                    \sigma_{i1} \coorbit{\phi_1} \left( \sum_{j = 1}^p c_{ij} f_j + \vorth_i \right) - \tau_{i1} \coorbit{\phi_1} \left( \sum_{j = 1}^p d_{ij} f_j + \worth_i \right) \end{matrix} \right. \right. \\
                & \qquad \left. \left. \begin{matrix}
                    \dots & \sigma_{i\nwin} \coorbit{\phi_\nwin} \left( \sum_{j = 1}^p c_{ij} f_j + \vorth_i \right) - \tau_{i\nwin} \coorbit{\phi_\nwin} \left( \sum_{j = 1}^p d_{ij} f_j + \worth_i \right) \end{matrix} \right) \right\rVert_2 \\
                &= \left\lVert \alpha \left( \begin{matrix}
                    \sigma_{i1} \coorbit{\phi_1} \vorth_i - \tau_{i1} \coorbit{\phi_1} \left( \sum_{j = 1}^p (d_{ij}-c_{ij}) f_j + \worth_i \right) \end{matrix} \right. \right. \\
                & \qquad \left. \left. \begin{matrix} \dots &
                    \sigma_{i\nwin} \coorbit{\phi_\nwin} \vorth_i - \tau_{i\nwin} \coorbit{\phi_\nwin} \left( \sum_{j = 1}^p (d_{ij}-c_{ij}) f_j + \worth_i \right) \end{matrix} \right) \right\rVert_2.
        \end{align*}
        Next, we may use that $\sigma_{i\ell}, \tau_{i\ell} \in \permsort{ \coorbit{\phi_\ell} f }$ for all $i \in \bbN$ and all $\ell \in [\nwin]$ to see that
        \begin{equation}\label{eq:numerator_again:lem:generalisedinduction}
            \begin{aligned}
                \MoveEqLeft[3] t_i \norm{\embedding(v_i) - \embedding(w_i)}_2 = t_i \norm*{ \alpha \begin{pmatrix}
                    \sigma_{i1} \coorbit{\phi_1} \vorth_i - \tau_{i1} \coorbit{\phi_1} u_i & \dots & \sigma_{i\nwin} \coorbit{\phi_\nwin} \vorth_i - \tau_{i\nwin} \coorbit{\phi_\nwin} u_i
                \end{pmatrix} }_2 \\
                &= \left\lVert \alpha \left( \begin{matrix}
                    \sigma_{i1} \coorbit{\phi_1} (f + t_i \vorth_i) - \tau_{i1} \coorbit{\phi_1} (f + t_iu_i) \end{matrix} \right. \right. \\
                & \qquad \left. \left. \begin{matrix} \dots & \sigma_{i\nwin} \coorbit{\phi_\nwin} (f + t_i \vorth_i) - \tau_{i\nwin} \coorbit{\phi_\nwin} (f + t_i u_i) \end{matrix} \right) \right\rVert_2 \\
                &= \norm*{ \alpha \begin{pmatrix}
                    \sigma_{i1} \coorbit{\phi_1} v_i' - \tau_{i1} \coorbit{\phi_1} w_i' & \dots & \sigma_{i\nwin} \coorbit{\phi_\nwin} v_i' - \tau_{i\nwin} \coorbit{\phi_\nwin} w_i'
                \end{pmatrix} }_2.
            \end{aligned}
        \end{equation}
        
        Finally, we want to use $\sigma_{i\ell} \in \permsort{\coorbit{\phi_\ell} v_i'}$ and $\tau_{i\ell} \in \permsort{\coorbit{\phi_\ell} w_i'}$ for all $i \in \bbN$ and all $\ell \in [\nwin]$. The first inclusion is true because $\permsort{\coorbit{\phi_\ell} (f+\vorth_i)} = \permsort{\coorbit{\phi_\ell} (f + t_i\vorth_i)}$ is guaranteed by Lemma~\ref{lem:equality_along_interval:sec:D} since
        \begin{align*}
            \diffminmax{\coorbit{\phi_\ell} t_i \vorth_i} &= t_i \diffminmax{\coorbit{\phi_\ell} \vorth_i} \leq \sqrt{2} t_i \norm{\coorbit{\phi_\ell} \vorth_i}_2 \leq \sqrt{2} \norm{\coorbit{\phi_\ell}}_\mathrm{op} t_i \normvsp{\vorth_i} \\
            &\leq \sqrt{2} \norm{\coorbit{\phi_\ell}}_\mathrm{op} t_i \normvsp{u_i} < \disthyperplane{ \coorbit{\phi_\ell} f }
        \end{align*}
        according to the definition of $t_i > 1$. The second inclusion follows in exactly the same way. 

        Returning to equation~\eqref{eq:numerator_again:lem:generalisedinduction}, we have 
        \begin{align*}
            \MoveEqLeft[3] t_i \norm{\embedding(v_i) - \embedding(w_i)}_2 = \norm*{ \alpha \begin{pmatrix}
                \sigma_{i1} \coorbit{\phi_1} v_i' - \tau_{i1} \coorbit{\phi_1} w_i' & \dots & \sigma_{i\nwin} \coorbit{\phi_\nwin} v_i' - \tau_{i\nwin} \coorbit{\phi_\nwin} w_i'
            \end{pmatrix} }_2 \\
            &= \norm*{ \alpha \begin{pmatrix}
                \sort{ \coorbit{\phi_1} v_i' } - \sort{ \coorbit{\phi_1} w_i' } & \dots & \sort{ \coorbit{\phi_\nwin} v_i' } - \sort{ \coorbit{\phi_\nwin} w_i' }
            \end{pmatrix} }_2 \\
            &= \norm{\embedding(v_i') - \embedding(w_i')}_2.
        \end{align*}
        We finally conclude that 
        \begin{equation*}
            \lim_{i \to \infty} \frac{\norm{\embedding(v_i') - \embedding(w_i')}_2}{\distqsp(v_i', w_i')} \leq \lim_{i \to \infty} \frac{\norm{\embedding(v_i) - \embedding(w_i)}_2}{\distqsp(v_i, w_i)} = 0.
        \end{equation*}
    \end{proof}

    Remember that $t_i \vorth_i$ and $t_i u_i$ are bounded sequences that converge to $f_{p+1}$ and $u$, respectively, as $i \to \infty$. Therefore, $v_i'$ and $w_i'$ are bounded sequences that converge to $f + f_{p+1}$ and $f + u$, respectively. Therefore, 
    \begin{align*}
        \norm{\embedding(f + f_{p+1}) - \embedding(f + u)}_2 &= \lim_{i \to \infty} \norm{\embedding(v_i') - \embedding(w_i')}_2 \\
        &\lesssim \lim_{i \to \infty} \frac{\norm{\embedding(v_i') - \embedding(w_i')}_2}{\distqsp(v_i', w_i')} = 0.
    \end{align*}
    Now, the injectivity of $\widehat \embedding$ implies that $f + f_{p+1} \sim f + u$; i.e., for some $g \in \group$, we have $g(f + f_{p+1}) = f + u$. It follows that $g \in \stabgroup{f}$: indeed, assume, by contradiction, that $g \not\in \stabgroup{f}$. Then, we have 
    \begin{align*}
        0 &= \normvsp{g(f + f_{p+1}) - (f + u)} \geq \normvsp{(g-e)f} - \normvsp{g f_{p+1} - u} \\
        &\geq \min_{g \not\in \stabgroup{f}} \normvsp{(g-e)f} - \normvsp{f_{p+1}} - \normvsp{u} \\
        &\geq \min_{g \not\in \stabgroup{f}} \normvsp{(g-e)f} - 2 \normvsp{u} > 0
    \end{align*}
    because
    \begin{equation*}
        \normvsp{u} = \lim_{i \to \infty} t_i \normvsp{u_i} \leq \frac{1}{4} \cdot \min_{g \not\in \stabgroup{f}} \normvsp{(g-e)f}.
    \end{equation*}
    Therefore, $g f_{p+1} = u$ and thus $\normvsp{f_{p+1}} = \normvsp{u} > 0$, which shows that $f_{p+1} \neq 0$. Finally, let $w_i'' := g^{-1} w_i' = f + t_i g^{-1} u_i$. Then, the sequences $(v_i')_{i = 1}^\infty$, $(w_i'')_{i = 1}^\infty$ still achieve lower Lipschitz constant zero. Therefore, they achieve local lower Lipschitz constant zero at $f + f_{p+1}$. Now, $f_{p+1}$ is orthogonal to $(f_j)_{j = 1}^k$ and satisfies 
    \begin{align*}
        \normvsp{f_{p+1}} &= \lim_{i \to \infty} t_i \normvsp{\vorth_i} \leq \lim_{i \to \infty} t_i \normvsp{u_i} \\
        &= \frac{\epsilon}{\sqrt{2}} \cdot \min\left\{ \min_{\ell \in [\nwin]} \norm{\coorbit{\phi_\ell}}_\mathrm{op}^{-1} \disthyperplane{\coorbit{\phi_\ell} f} , \frac{1}{2 \sqrt{2}} \cdot \min_{g \not\in \stabgroup{f}} \normvsp{(g-e)f} \right\}
    \end{align*}
    Therefore, 
    \begin{align*}
        \diffminmax{ \coorbit{\phi_\ell} f_{p+1} } &\leq \sqrt{2} \cdot \norm{ \coorbit{\phi_\ell} f_{p+1} }_2 \leq \sqrt{2} \norm{\coorbit{\phi_\ell}}_\mathrm{op} \cdot \normvsp{f_{p+1}} \\
        &< \norm{\coorbit{\phi_\ell}}_\mathrm{op} \cdot \min_{k \in [\nwin]} \norm{\coorbit{\phi_k}}_\mathrm{op}^{-1} \disthyperplane{\coorbit{\phi_{k}} f} \leq \disthyperplane{\coorbit{\phi_\ell} f}
    \end{align*}
    and the lemma is proven.
\end{proof}

Combining the base case and the induction step allows us to prove that injectivity implies bi-Lipschitz.

\begin{proof}[Proof of Theorem~\ref{thm:main}]
    Remember that $\widehat \embedding$ is Lipschitz continuous according to Proposition~\ref{prop:Lipschitz:sec:D}. Now, assume by contradiction that $\widehat \embedding$ is \emph{not} lower Lipschitz continuous. Then, Lemma~\ref{lem:base_case:sec:D} together with Lemma~\ref{lem:induction_step:sec:D} show that there exists an orthogonal basis $(f_j)_{j = 1}^\dimvsp \in \vsp$ such that the local lower Lipschitz constant of $\widehat \embedding$ vanishes at $f := f_1 + f_2 + \dots + f_\dimvsp$ and 
    \begin{equation*}
        \forall \ell \in [\nwin], k \in [d-1] : \diffminmax{ \coorbit{ \phi_\ell } f_{k+1} } < \disthyperplane[\Bigg]{ \sum_{j = 1}^k \coorbit{\phi_\ell} f_j }.
    \end{equation*}
    It follows from Lemma~\ref{lem:proofofmaintheorem} that $\widehat \gamma$ is \emph{not} injective: a contradiction.
\end{proof}

\bibliography{sources}
\bibliographystyle{plain}

\end{document}